% ------------------------------------------------------------------------
% AMS-LaTeX Paper ********************************************************
% ------------------------------------------------------------------------
% Submitted:      Trans.Amer.Math.Soc. in February 1995
% Final Version:  July 1995
% Accepted:       June 1995
% ------------------------------------------------------------------------
% This is a journal top-matter template file for use with AMS-LaTeX.
%%%%%%%%%%%%%%%%%%%%%%%%%%%%%%%%%%%%%%%%%%%%%%%%%%%%%%%%%%%%%%%%%%%%%%%%%%

\documentclass[12pt]{amsart}%{article}%
\usepackage{amssymb}
\usepackage[mathscr]{eucal}
\usepackage{epsf}
\usepackage{epsfig}
\usepackage{color}
\DeclareGraphicsExtensions{.pstex,.eps,.epsi,.ps}

%\usepackage[active]{srcltx} % SRC Specials for DVI Searching

% Over-full v-boxes on even pages are due to the \v{c} in author's name
\vfuzz2pt % Don't report over-full v-boxes if over-edge is small

% THEOREM Environments ---------------------------------------------------
 \newtheorem{thm}{Theorem}[section]
 \newtheorem{cor}[thm]{Corollary}
 \newtheorem{lem}[thm]{Lemma}
 \newtheorem{prop}[thm]{Proposition}
 \theoremstyle{definition}
 
 \theoremstyle{remark}
 
 \numberwithin{equation}{section}
% MATH -------------------------------------------------------------------

 \newcommand{\Real}{\mathbb{R}}

 \newcommand{\hor}{\text{hor}}
 
 \newcommand{\tr}{\textbf{tr}}
 
 \newcommand{\vol}{\textbf{vol}}

 \newcommand{\Rm}{\textbf{Rm}}

\newcommand{\e}{\epsilon}

\newcommand{\cJ}{\mathcal J}

%%% ----------------------------------------------------------------------
\begin{document}

\title[Ricci on Sasakian]{Ricci curvature lower bounds on Sasakian manifolds}

\author{Paul W.Y. Lee}
\email{wylee@math.cuhk.edu.hk}
\address{Room 216, Lady Shaw Building, The Chinese University of Hong Kong, Shatin, Hong Kong}

\date{\today}

\maketitle

\begin{abstract}
Measure contraction property is a synthetic Ricci curvature lower bound for metric measure spaces. 
We consider Sasakian manifolds with non-negative Tanaka-Webster Ricci curvature equipped with the metric measure space structure defined by the sub-Riemannian metric and the Popp measure. We show that these spaces satisfy the measure contraction property $MCP(0,N)$ for some positive integer $N$. We also show that the same result holds when the Sasakian manifold is equipped with a family of Riemannian metrics extending the sub-Riemannian one. 
\end{abstract}

%%% ---------------------------------------------------------------------

\section{Introduction}

In the last decade, there is a surge of interest in the study of metric measure spaces satisfying synthetic Ricci curvature lower bounds. This area of research begins with the work by Lott-Villani \cite{LoVi1} and Sturm \cite{St1, St2}. In there, they introduce a notion of synthetic Ricci curvature lower bound for length spaces equipped with a measure, called curvature-dimension condition, using the theory of optimal transportation. Length spaces are metric spaces for which the distance between any two points is the same as the length of curves, called geodesics, connecting the two points. Since then, numerous work is devoted to the search of other notions of synthetic Ricci curvature lower bound and extending various well-known results on Riemannian manifolds with Ricci curvature lower bound to these spaces. Here is a non-exhaustive list of related works \cite{LoVi1, St1, St2, Oh1, Ol, AmGiSa, Gi} (see also \cite{Vi} for an introduction and a more complete list of references). 

In this paper, we focus on another notion of synthetic Ricci curvature lower bound called the measure contraction property $MCP(k, n)$ introduced and studied in \cite{St1, St2, Oh1}. Roughly speaking, a length space equipped with a measure satisfies the measure contraction property if for each Borel set and a point in the length space, there is an interpolation by geodesics between them such that the ratio of the measure of this interpolation to that of the set can be estimated from below by the corresponding ratio in a space form. When the metric measure space is a Riemannian manifold of dimension $n$ equipped with the Riemannian volume, then the condition $MCP(k,n)$ is equivalent to the Ricci curvature bounded of the Riemannian manifold being bounded below by $k$. 

The situation is quite different in the sub-Riemannian setting. In this case, the curvature-dimension condition does not hold. In \cite{Ju}, it was proved that the Heisenberg group of dimension $2n+1$ satisfies $MCP(0, 2n+3)$. This was extended to Sasakian manifolds equipped with the sub-Riemannian metric and the Popp volume in \cite{AgLe, LeLiZe} under curvature conditions defined by the Tanaka-Webster curvature (see section \ref{Sa} for the precise definitions of Sasakian manifolds and the Tanaka-Webster curvature). In this paper, we relax this condition to the non-negativity of the Tanaka-Webster Ricci curvature. 

\begin{thm}\label{main}
Let $M$ be a manifold equipped with a Sasakian structure with non-negative Tanaka-Webster Ricci curvature. Then the metric measure space $(M, d_S, \vol)$ satisfies $MCP(0,N)$ for some positive integer $N$, where $d_S$ is the sub-Riemannian metric and $\vol$ is the Popp measure. 
\end{thm}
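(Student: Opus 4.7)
The plan is to establish $MCP(0,N)$ via the ``Jacobian of the contraction map'' strategy already used in \cite{Ju, AgLe, LeLiZe}. Fix a reference point $x_0 \in M$. Since the sub-Riemannian cut locus of $x_0$ and the set of endpoints of abnormal minimizers from $x_0$ are both $\vol$-negligible on a Sasakian manifold, for $\vol$-almost every $x$ there is a unique length-minimizing normal geodesic $\gamma_x:[0,1]\to M$ joining $x_0$ to $x$. Define the contraction $\phi_t(x)=\gamma_x(t)$. Standard reductions show that $MCP(0,N)$ for $(M,d_S,\vol)$ follows once one proves the pointwise Jacobian estimate $\det D\phi_t(x) \geq t^N$ almost everywhere, computed with respect to the Popp volume.

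To access this Jacobian I would lift the picture to $T^*M$: write $\gamma_x(t)=\pi(e^{t\Hvec}(\lambda_0(x)))$, where $\Hvec$ is the Hamiltonian vector field of the sub-Riemannian kinetic energy and $\lambda_0(x)$ is the (almost everywhere unique) initial covector of $\gamma_x$. The derivative of $\phi_t$ at $x$ is then expressed through the differential of the sub-Riemannian exponential map, and this differential is in turn governed by a Jacobi-type linear ODE on the vertical subbundle of $T(T^*M)$ along the extremal $e^{t\Hvec}(\lambda_0(x))$.

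The main analytical step is to fix a Darboux frame along each extremal adapted to the Sasakian structure: decompose the contact distribution using the CR complex structure $J$ and single out the Reeb direction. In such a frame the Jacobi ODE becomes a matrix Riccati equation whose ``curvature coefficient'' is built from the Tanaka-Webster curvature operator (as in \cite{AgLe, LeLiZe}), together with algebraic contributions coming from the sub-Riemannian coupling between the contact and Reeb directions. Since the quantity we need to control is $\frac{d}{dt}\log \det D\phi_t$, only the \emph{trace} of the Riccati equation enters; the curvature term in that trace depends on the full Tanaka-Webster tensor only through the Tanaka-Webster Ricci curvature along the geodesic direction, and by hypothesis it is non-negative and can be dropped.

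The principal obstacle is that the previous Sasakian results relied on sectional-type bounds to control each eigenvalue of the Riccati matrix, whereas here only the trace hypothesis is available. The plan is to combine the trace Riccati identity with a matrix Cauchy--Schwarz (or AM--GM) inequality to collapse the matrix equation into a scalar Riccati differential inequality in a single quantity, namely $\frac{d}{dt}\log \det D\phi_t$; after discarding the non-negative Ricci term, comparison with the explicit curvature-free model equation of Heisenberg type (integrated from $t=1$ backwards to $t$) yields $\det D\phi_t \geq t^N$ with the exponent $N$ determined by the dimension of the contact distribution together with the additional multiplicity contributed by the Reeb direction, in direct analogy with Juillet's exponent $2n+3$ for $\mathbb H^{2n+1}$. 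A final check that $N$ is a positive integer depending only on $\dim M$, and that the negligible sets above do not disturb the measure-theoretic conclusion, completes the argument.
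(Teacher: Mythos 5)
The overall framework you propose (Jacobian of the contraction map, lift to the cotangent bundle, Riccati equation along the extremal, compare with an explicit model) is indeed the one the paper uses, and your preliminary reductions are fine. But the central analytic step is wrong in a way that matters: you cannot ``collapse the matrix Riccati equation into a scalar Riccati inequality via AM--GM/Cauchy--Schwarz and drop the Ricci term.'' The obstruction is the degeneracy of the quadratic coefficient. In the paper's Lemma \ref{Ricc} the Riccati equation has the form $\dot S - US - SU^T + \bar R + E + SDS = 0$ with $D=\mathrm{diag}(0,I_{2n})$: the Reeb direction contributes no quadratic self-interaction. The quantity you actually need to bound, $\Delta_H f_t = S_{22}+S_{33}+\tr S_3$, excludes the $(0,0)$ Reeb entry $S_{11}$, yet $S_{11}$ is dynamically coupled to $S_{33}$ through the off-diagonal $S_{13}$ and the skew part $U$. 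Because $D$ annihilates the Reeb direction, $\tr(SDS)$ is not $(\tr S)^2$ (nor anything AM--GM can produce), and no closed scalar differential inequality in $\tr S$ or in $\Delta_H f_t$ exists. In particular, the component $S_{11}$ blows up like $(1-t)^{-3}$ and $S_{13}$ like $(1-t)^{-2}$ as $t\to 1$ (Lemma \ref{asy}), so a scalar trace comparison would see a cubic singularity and cannot reproduce the $(1-t)^{-1}$ rate required by $MCP(0,N)$.

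What the paper actually has to do is more delicate: split $S$ into the $3\times 3$ block $S_1$ (indices for Reeb, geodesic direction, and $\cJ$ of geodesic direction) and the $(2n-2)\times(2n-2)$ block $S_3$; extract from $S_3$ a scalar Riccati inequality for $s_3 = \tr S_3$ in which Tanaka--Webster Ricci nonnegativity is finally used; bound $S_{22}$ separately; then for the $2\times 2$ block $\tilde S = \bigl(\begin{smallmatrix} S_{11} & S_{13} \\ S_{13} & S_{33}\end{smallmatrix}\bigr)$ introduce the shifted matrix $\bar S = \tilde S + C_3 + s_3 C_0$ and compare it, via Royden's Riccati comparison, against an \emph{explicit} matrix solution $S_0(t)$ built from a hand-picked time-dependent matrix $K(t)$ with the correct cubic/quadratic/linear singularities at $t=1$. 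This coupled comparison, not a trace inequality, is the real content of the proof; only the $s_3$ inequality resembles the classical Ricci-trace argument. So the gap in your proposal is precisely the claim that the trace suffices; the Reeb coupling and the rank-deficient $D$ defeat that, and you would need something like the paper's explicit $K(t)$ and comparison matrix $S_0(t)$ to close the estimate.
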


There are numerous consequences of Theorem \ref{main}. It follows immediately from the condition $MCP(0,N)$ that the metric measure space $(M, d_S, \vol)$ satisfies the doubling condition. This means that if the $B_x(r)$ is a ball of radius $r$ centered at $x$, then $\frac{\vol(B_x(2r))}{\vol(B_x(r))}$ is bounded above by a constant independent of the point $x$ and and the radius $r$.  It also follows from \cite{LoVi2} (see also \cite{LeLiZe}) and a classical argument  due to Jerison \cite{Je} that the $L^p$-Poinca\'re inequalities hold. The doubling condition and the $L^2$-Poinca\'re inequality together also gives parabolic Harnack inequality \cite{Mo1, Mo2, Gr, Sa1} and heat kernel bounds \cite{FoSt} for the sub-elliptic heat equation  (see also \cite{KuSt} for the converse). 

It also follows from the doubling condition, the $L^2$-Poinca\'re inequality, and the work in \cite{CoMi} that the linear space of harmonic functions with polynomial growth of a fixed degree is finite dimensional. Finally, our approach  can be used to prove a Bonnet-Myers type theorem (see Theorem \ref{Myers}). 
This covers and gives an alternative approach to all the results in \cite{BaBoGa} and \cite{BaGa} at least in the Sasakian case. In fact, it follows from the proof of Theorem \ref{main} that a Laplacian comparison theorem holds (see Corollary \ref{LapCom}). This can be used to prove a local differential Harnack inequality for the sub-elliptic heat equation which improves the global result in \cite{BaGa}. 

One can also combine the doubing condition, the $L^p$-Poinca\'re inequality, and the work in \cite{CoHoSa} to obtain the Harnack inequality and the Liouville theorem for solutions of the sub-elliptic version of the $p$-Laplace equation. It also gives rigidity results for quasi-regular mappings. These consequences are, to the knowledge of the author, not covered by any previous work. 

In addition, we also prove the measure contraction property holds for a family of Riemannian manifolds for which the Riemannian metrics $\left<\cdot,\cdot\right>^\e$ extend the sub-Riemannian one (see Section \ref{Riem} for the precise definition). Measure contraction property for such a family was considered in \cite{Ri} for the three dimensional Heisenberg group and then in \cite{Le2} for the Sasakian case under a curvature condition defined by the Tanaka-Webster curvature. We relax this condition to the non-negativity of the Tanaka-Webster Ricci curvature in the following theorem. Consequences analogous to those mentioned above for Theorem \ref{main} also hold in these Riemannian cases. 

\begin{thm}\label{main2}
Let $M$ be a manifold equipped with a Sasakian structure with non-negative Tanaka-Webster Ricci curvature. Then the family of metric measure spaces $(M, d_\e, \vol)$ satisfies $MCP(0,N)$ for some positive integer $N$, where $d_\e$ is the Riemannian distance defined by the Riemannian metric $\left<\cdot,\cdot\right>^\e$. 
\end{thm}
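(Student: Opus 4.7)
\textbf{Proof proposal for Theorem \ref{main2}.}

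My plan is to mimic the strategy that proves Theorem \ref{main}, but carried out for the one-parameter family of Riemannian metrics $\langle\cdot,\cdot\rangle^\e$. Recall that the standard way to verify $MCP(0,N)$ on a complete length space with a smooth reference measure is to fix a base point $x_0$, work with the exponential map based at $x_0$, and prove a pointwise lower bound of the form $J_\e(t,v)\ge t^{N}\,h_\e(v)$, where $J_\e(t,v)$ is the Jacobian (with respect to $\vol$) of the $d_\e$-exponential map evaluated along the geodesic $t\mapsto\exp^\e_{x_0}(tv)$ and $h_\e(v)$ depends only on the initial covector. Integrating such an inequality over a Borel set $A$ gives the required volume estimate $\vol(A_t)\ge t^N\vol(A)$ on the $t$-interpolation. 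Thus everything reduces to obtaining a uniform (in $\e$) Jacobi-type lower bound along every minimizing $\e$-geodesic.

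To produce the estimate, I would first record the geometric set-up carried over from Section \ref{Sa}: a Sasakian manifold carries the Reeb field $\xi$, the horizontal distribution $\mathcal{H}=\ker\eta$, and the $(1,1)$-tensor $J$ restricted to $\mathcal{H}$. The family $\langle\cdot,\cdot\rangle^\e$ is the sub-Riemannian metric on $\mathcal{H}$ together with $\e$ times the standard metric in the Reeb direction (the extension made precise in Section \ref{Riem}). With respect to this data, I would decompose any Jacobi field along an $\e$-geodesic into horizontal and vertical pieces and rewrite the Levi-Civita Jacobi equation in terms of the Tanaka-Webster connection plus correction tensors involving $J$ and $\eta$. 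The standard identities comparing the two connections on Sasakian manifolds make this decomposition explicit and let the Tanaka-Webster Ricci curvature appear as the dominant curvature term, with the remaining pieces nonnegative thanks to the Sasakian structure.

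Once the Jacobi system is in this form, I would pass to its matrix Riccati equation $\Lambda'+\Lambda^2+R_\e=0$ for the shape operator $\Lambda(t)$ along the geodesic. The hypothesis that the Tanaka-Webster Ricci curvature is nonnegative, combined with the nonnegativity of the remaining Sasakian correction terms, gives $\tr R_\e(t)\ge 0$ in a form strong enough to run a Bishop-type comparison against the model $\Lambda_0(t)=\frac{N-1}{t}I$, exactly as in the proof of Theorem \ref{main}. Integrating $-\tr\Lambda_\e(t)=(\log J_\e)'(t)$ then yields the desired volume distortion with an exponent $N$ that depends only on the rank of $\mathcal{H}$ and the dimension of $M$, and in particular is independent of $\e$. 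Verifying MCP requires running this comparison along a measurable selection of minimizing $\e$-geodesics to the base point, for which one uses the almost-everywhere uniqueness of $d_\e$-geodesics that holds in the Riemannian regime.

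The main obstacle I expect is controlling the Sasakian correction tensors uniformly in $\e$. As $\e\to 0$ the Reeb direction is rescaled, and one must check that the additional curvature terms produced in the Levi-Civita Jacobi operator have a definite sign (or at least are absorbed into the Tanaka-Webster Ricci term) for \emph{every} $\e>0$, not merely in a limit. If this uniformity fails for one of the cross-terms between horizontal and vertical Jacobi components, the natural remedy is to renormalize the Jacobi field by an $\e$-dependent gauge (as is done in the sub-Riemannian proof for the vertical direction) so that the transformed Riccati equation has a curvature term that is manifestly bounded below by the Tanaka-Webster Ricci curvature. Handling this renormalization carefully is, in my view, the only genuinely new calculation required beyond Theorem \ref{main}; with it in place, the MCP conclusion follows by the same Jacobian integration scheme.
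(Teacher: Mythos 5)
Your high-level plan — reduce to a Jacobian lower bound along $\e$-geodesics, write a matrix Riccati equation for the Hessian of the squared-distance, and compare against a model — is consistent with the paper's overall strategy. However, the proposal has a genuine gap at the step where the comparison is actually carried out.

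You assert that after rewriting the Jacobi system in terms of the Tanaka–Webster connection, ``the remaining pieces [are] nonnegative thanks to the Sasakian structure,'' so that one can run a Bishop-type comparison against $\Lambda_0(t)=\frac{N-1}{t}I$. This is not the case. Passing from the Levi-Civita connection to the Tanaka–Webster connection does \emph{not} leave a curvature remainder of a definite sign. In the paper's Lemma \ref{Ricce}, the Riccati equation for the Hessian matrix $S^\e$ along the geodesic takes the form
\[
\dot S^\e - U^\e S^\e - S^\e (U^\e)^T + \bar R^\e + E^\e + S^\e D^\e S^\e = 0,
\]
where the drift $U^\e$ carries a $J$-twist, the coefficient $D^\e$ is not a multiple of the identity (the Reeb direction enters with a factor $\e^{-2}$), and the correction matrix $E^\e$ has indefinite entries (e.g.\ the $(3,3)$ entry $\frac{b^2}{4}-a^2+\frac{a^2}{4\e^2}$ and off-diagonal $-\frac{ab}{4}$ blocks). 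The Riemann curvature also contributes a strictly negative term $-\frac{3a^2}{4}$ that is \emph{not} absorbed by nonnegativity of the Tanaka–Webster Ricci tensor. As a result, the scalar Bishop comparison $\Lambda_0(t)=\frac{N-1}{t}I$ does not dominate $S^\e$: the blow-up as $t\to 1$ is anisotropic (the Reeb block blows up like $t^{-3}$ in the sub-Riemannian limit, see Lemma \ref{asy}), and the cross-terms between Reeb and horizontal directions are essential.

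The actual proof must therefore handle a genuinely coupled Riccati system. The paper does this by (i) splitting $S^\e$ into blocks, (ii) introducing an affine shift (the matrices $C_0$, $C_3$, $C_4$) and a rescaling $\hat S = (C_0+\frac{1}{\e}C_5)\bar S(C_0+\frac{1}{\e}C_5)$ that makes the inequality tractable, and (iii) comparing against the explicit solution of a third-order ODE whose coefficients involve $\arctan$ and $\log$ and which is obtained via the second symmetric power / Kovacic machinery. Your proposed ``renormalization'' gesture toward this step, but without this concrete construction — in particular without the explicit comparison solution $S_0$, which is \emph{not} of the form $\frac{N-1}{t}I$ — the argument cannot close, because the sign structure you rely on simply does not hold.
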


In section \ref{Sub}, we will give precise definition of the measure contraction property in the sub-Riemannian setting. We will also begin the proof of Theorem \ref{main} in this section. In section \ref{Sa}, we discuss properties of Sasakian manifolds and the corresponding Tanaka-Webster curvature. The complete proof of Theorem \ref{main} is also contained in this section. Section \ref{Riem} is devoted to the proof of Theorem \ref{main2}. The proof of the Bonnet-Myers type theorem mentioned above is contained in the last section. 

\smallskip

\section*{Acknowledgements}
The question answered in this paper is raised by Professor Fabrice Baudoin during one of his talks. The author would like to thank him for his encouragement. This research was supported by the Research Grant Council of Hong Kong (RGC Ref. No. CUHK404512). 

\smallskip

\section{Sub-Riemannian Manifolds and the Measure Contraction Property}\label{Sub}

In this section, we recall various notions and facts about contact sub-Riemannian manifolds and the measure contraction property. 

A sub-Riemannian manifold is a manifold $M$ equipped with a distribution $\mathcal D$ and a positive definite metric $\left<\cdot, \cdot\right>_{SR}$ defined on $\mathcal D$. A smooth path in $M$ is a horizontal path if it is everywhere tangent to the distribution $\mathcal D$. Assume that $\mathcal D$ is bracket generating meaning that there is a positive integer $k$ such that the iterated Lie-brackets of vector fields in $\mathcal D$ up to order $k$ generate the each tangent space of $M$. Under this assumption, the Chow-Rashevskii theorem (see \cite{Mo}) guarantees that any two points can be connected by a horizontal path. Because of this, one can define a distance $d_S$, called the sub-Riemannian or Carnot-Caratheodory distance. The distance $d_S(x,y)$ between two points $x$ and $y$ is given by the length of the shortest horizontal path connecting $x$ and $y$. Here the length is measured by the given sub-Riemannian metric $\left<\cdot, \cdot\right>_{SR}$. 

In this paper, the distributions of the sub-Riemannian manifolds are contact. A distribution $\mathcal D$ is contact if there is a 1-form $\eta$ such that $\ker\eta=\mathcal D$ and the restriction of $d\eta$ to $\mathcal D$ is non-degenerate. Let $X_1$ and $X_2$ be two vector field in $\mathcal D$. It follows that $d\eta(X_1, X_2)=-\eta([X_1,X_2])$. Therefore, by the non-degeneracy condition, $\mathcal D$ is bracket generating. 

Let $V$ be the Reeb field defined by $\eta(V)=1$ and $d\eta(V,\cdot)=0$. We also equipped $M$ with a Riemannian metric $\left<\cdot,\cdot\right>$ such that $V$ is of length 1, $V$ is orthogonal to $\ker\eta$, and $\left<\cdot,\cdot\right>$ extends the sub-Riemannian one $\left<\cdot,\cdot\right>_{SR}$. Let $\vol$ be the Riemannian volume form of this Riemannian metric. This volume form coincides with the Popp volume in the sub-Riemannian case (see \cite{Mo} for the definition of Popp measure for general sub-Riemannian manifolds). We will also denote the corresponding measure with the same symbol. For the rest of this paper, we consider the following metric measure space $(M, d_S, \vol)$. 

Let $x_0$ be a point on the manifold $M$ and let $f_0(x)=-\frac{1}{2}d^2_S(x_0,x)$. By \cite[Theorem 1]{CaRi} (see also \cite{FiRi}), the function $f_0$ is locally semiconcave on $M-\{x_0\}$. Therefore, by Alexandrov theorem (see \cite{EvGa, Vi}), $f_0$ is twice differentiable $\vol$ almost everywhere. 

Let $H:T^*M\to\Real$ be the sub-Riemannian Hamiltonian defined by $H(x,p)=\frac{1}{2}|p^b|^2_{SR}$, where $p^b$ is the unique tangent vector in the distribution $\ker(\eta)$ satisfying $\left<p^b,v\right>_{SR}=p(v)$ for all $v$ in $\ker(\eta)$. Let $e^{t\vec H}$ be the Hamiltonian flow of $H$ and let $\varphi_t:M\to M$ be the map defined by 
\[
\varphi_t(x)=\pi(e^{t\vec H}(x,df_0)), 
\]
where $\pi:T^*M\to M$ is the projection map. It follows as in \cite{Mc, EvGa} that the map $x\mapsto (x, df_0)$ and hence $\varphi_t$ for all $t$ in $[0,1]$ are Borel maps. 

The measure contraction property $MCP(0,N)$ for the metric measure space $(M, d_S, \vol)$ can now be rewritten in terms of the map $\varphi_t$ as
\begin{equation}\label{MCP}
\vol(\varphi_t(U))\geq (1-t)^N\vol(U)
\end{equation}
for all Borel set $U$. Here $\varphi_t(U)$ is the interpolation between the set $U$ and the point $x_0$ by geodesics mentioned in the introduction. 

Let $Z$ be the set of all points where $f_0$ is twice differentiable. For each point $x$ in $Z$, the curve $t\mapsto \varphi_t(x)$ is the unique length minimizing sub-Riemannian geodesic starting from $x$ and ending at $x_0$. It follows that $\varphi_t$ is injective on $Z$ for all $0\leq t<1$. In particular, in order to show $MCP(0,N)$, it is enough to show that 
\begin{equation}\label{MCPII}
\vol(U)\geq (1-t)^N(\varphi_t)_*\vol (U)
\end{equation}
for any Borel set $U$. 

Finally, we are going to show that 
\begin{equation}\label{Jac}
\det(d\varphi_t)=e^{\int_0^t\Delta_Hf_s(\varphi_s)ds}
\end{equation}
on $Z$, where $f_t(x)=-\frac{1}{2(1-t)}d^2_S(x_0,x)$ and $\Delta_H$ denotes the sub-Laplacian. The sub-Laplacian is defined by 
\[
\Delta_H f(x)=\sum_{i=1}^{2n}\left<\nabla^2 f(v_i),v_i\right>,
\]
where $\{v_1,...,v_{2n}\}$ is an orthonormal basis of $\ker(\eta_x)$ and $\nabla^2$ is the Hessian with respect to the Riemannian metric defined above. 

Therefore, it follows from \cite[Theorem 11.1]{Vi} that the measure $(\varphi_t)_*\vol$ is concentrated on $\varphi_t(Z)$, it is absolutely continuous with respect to $\vol$, and its density $g$ with respect to $\vol$ satisfies $g(\varphi_t)\det(d\varphi_t)=1$. 

On the other hand, if we can show 
\begin{equation}\label{Lap}
\Delta_H f_t(\varphi_t(x))\geq -\frac{N}{1-t}
\end{equation}
for all $x$ in $Z$, then it follows from (\ref{Jac}) and (\ref{Lap}) that 
\[
\det(d\varphi_t)\geq e^{-N\int_0^t\frac{1}{1-s}ds}= (1-t)^N
\]
and so $g\geq \frac{1}{(1-t)^N}$ for $\vol$ a.e. Hence, (\ref{MCPII}) holds. 

The rest of this section is devoted to the proof of (\ref{Jac}). (\ref{Lap}) will be shown in other sections under additional assumptions stated in Theorem \ref{main}. 

First, note that if the curve $t\mapsto \pi(e^{t\vec H}(x_0,p_0))$ is a length minimizing geodesic, then, for each $t$ in $(0, 1)$, $p\mapsto \pi(e^{t\vec H}(x_0,p))$ is a diffeomorphism from a neighbourhood of $p_0$ in the cotangent space $T^*_xM$ to a neighbourhood of $\pi(e^{t\vec H}(x_0,p_0))$ in the manifold $M$. It follows from this that the function $f_t$ is smooth at $\varphi_t(x)$ for each $x$ and each $0<t<1$, so the term $\Delta_H f_t(\varphi_t)$ in (\ref{Jac}) makes sense. 

Let $x$ be in the set $Z$ where $f_t$ is differentiable. Then $\nabla_H f_0$ is the initial velocity of the geodesic $t\mapsto \varphi_t(x)$, where $\nabla_H f_0$ is the horizontal gradient of the function $f_0$ defined by projecting $\nabla f_0$ orthogonally onto the distribution $\ker\eta$. It follows that $|\nabla_H f_0|=d_{S}(x_0 ,x)$ and so $f_t$ satisfies the Hamilton-Jacobi equation 
\begin{equation}\label{HJ}
\dot f_t=-\frac{1}{2}|\nabla_H f_t|^2=-\frac{1}{2}|\nabla f_t|^2+\frac{1}{2}\left<\nabla f_t,V\right>^2. 
\end{equation}
Therefore, by the method of characteristics, 
\begin{equation}\label{flow}
\dot\varphi_t(x)=\nabla_H f_t(\varphi_t(x))=\nabla f_t(\varphi_t(x))-\left<\nabla f_t,V\right>_{\varphi_t(x)}V(\varphi_t(x)).
\end{equation}

Let $v_0(t)=V(\varphi_t(x))$ and let $v_1(t),...,v_{2n}(t)$ be a family of orthonormal frames of $\ker\eta$ defined along the path $t\mapsto\varphi_t(x)$. This family can be chosen in such a way that $\dot v_i(t)$ is contained in the orthogonal complement of $\ker\eta$. 

Let $A(t)$ and $W(t)$ be the matrices defined by 
\[
d\varphi_t(v_i(0))=\sum_{j=0}^{2n}A_{ij}(t)v_j(t)
\]
and 
\[
\dot v_i(t)=\sum_{j=0}^{2n}W_{ij}(t)v_j(t). 
\]
Note that $W(t)$ is skew symmetric. 

By differentiating (\ref{flow}) in $x$, we obtain 
\[
\begin{split}
&\sum_{j=0}^{2n}\dot A_{ij}(t)v_j(t)+\sum_{j, k =0}^{2n}A_{ik}(t)W_{kj}(t)v_j(t)=\frac{d}{dt} d\varphi_t(v_i(0))\\
&=\sum_{j=0}^{2n}A_{ij}(t)\Big[\nabla^2 f_t(v_j(t))-\left<\nabla^2 f_t(v_j(t)), v_0(t)\right>v_0(t)\\
&-\left<\nabla f_t,\nabla V(v_j(t))\right>v_0(t)-\left<\nabla f_t,V\right>_{\varphi_t}\nabla V(v_j(t))\Big]. 
\end{split}
\]

Since $W(t)$ is skew symmetric, it follows that 
\[
\begin{split}
\tr(A(t)^{-1}\dot A(t))&=\Delta_H f_t(\varphi_t(x))-\left<\nabla f_t,\nabla V(V)\right>_{\varphi_t(x)}\\
&-\left<\nabla f_t,V\right>_{\varphi_t(x)}\tr(\nabla V(x)). 
\end{split}
\]

Therefore, by \cite[Theorem 4.5 and Lemma 6.2]{Bl}, 
\[
\tr(A(t)^{-1}\dot A(t))=\Delta_H f_t(\varphi_t(x)).
\]

Finally, since $A(0)=I$ and $\frac{d}{dt}\log\det(A(t))=\tr(A(t)^{-1}\dot A(t))$, (\ref{Jac}) follows. 

\smallskip

\section{Sasakian Manifolds and Measure Contraction Property}\label{Sa}

In this section, we recall the definition of a Sasakian manifold and prove some properties needed for this paper. We then give a proof of (\ref{Lap}) and hence Theorem \ref{main}. 

Let $\eta$ be a contact form on a manifold $M$ of dimension $2n+1$ and let $V$ be the Reeb field. Let $\cJ$ be a $(1,1)$-tensor satisfying $\cJ V=0$, $\cJ^2X=-X$ for all horizontal vector field $X$ (i.e. $X$ is  contained in the distribution $\ker\eta$), and
\begin{equation}\label{conmet}
d\eta(X_1,X_2)=\left<X_1,\cJ X_2\right>.
\end{equation}
We call the structure $(\cJ,V,\eta,\left<\cdot,\cdot\right>)$ is Sasakian if the following holds for all vector fields $Y_1$ and $Y_2$:
\begin{equation}\label{Nij}
\begin{split}
&d\eta(Y_1,Y_2)V\\
&=-\cJ^2[Y_1,Y_2]+\cJ[\cJ Y_1,Y_2]+\cJ[Y_1,\cJ Y_2]-[\cJ Y_1,\cJ Y_2].
\end{split}
\end{equation}

Next, we state some properties of the Levi-Civita connection which is needed for this paper. The proof can be found in \cite{Bl, Le2}. 

\begin{prop}\label{ind}
Assume that the structure $(\cJ,V,\eta,\left<\cdot,\cdot\right>)$  is Sasakian. Then
\begin{enumerate}
\item $\nabla_YV=-\frac{1}{2}\cJ Y$, 
\item $\nabla_{X_1}\cJ(X_2)=\frac{\left<X_1,X_2\right>}{2}V$, 
\item $\nabla_X\cJ(V)=-\frac{1}{2}X$, 
\item $\nabla_V\cJ=0$, 
\end{enumerate}
for all vector fields $Y$ and all horizontal vector fields $X_1$, $X_2$. 
\end{prop}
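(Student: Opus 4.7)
The plan is to establish (1) first, then derive (3) and (4) as quick algebraic corollaries, and finally tackle (2), which is the main obstacle. A unifying observation is that (2)--(4) together amount to the single tensorial identity
\[
(\nabla_X \cJ)(Y) = \tfrac{1}{2}\bigl(\langle X, Y\rangle\, V - \eta(Y)\, X\bigr).
\]

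For (1), the key reduction is that $V$ is a Killing vector field. Since the metric normalization forces $\eta(\cdot) = \langle \cdot, V\rangle$, metric compatibility of $\nabla$ gives
\[
d\eta(Y, Z) = \langle \nabla_Y V, Z\rangle - \langle \nabla_Z V, Y\rangle,
\]
so the antisymmetric part of $(Y, Z) \mapsto \langle \nabla_Y V, Z\rangle$ is $\tfrac{1}{2} d\eta = -\tfrac{1}{2}\langle \cJ Y, Z\rangle$; thus (1) follows once we know the symmetric part vanishes, i.e.\ $\mathcal{L}_V g = 0$. To produce this from (\ref{Nij}), I would specialize to $Y_1 = V$: the left-hand side vanishes by $i_V d\eta = 0$, and every term containing $\cJ V = 0$ drops out, leaving $\cJ^2[V, Y_2] = \cJ[V, \cJ Y_2]$. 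Since $\cJ^2$ acts as $-1$ times the horizontal projection, this is equivalent to $\hor[V, \cJ Y_2] = \cJ\,\hor[V, Y_2]$. Combined with $\mathcal{L}_V \eta = 0$ (from Cartan's formula), which handles the vertical component automatically, this amounts to $\mathcal{L}_V \cJ = 0$. Since also $\mathcal{L}_V d\eta = 0$ and $d\eta = \langle \cdot, \cJ \cdot\rangle$, we obtain $(\mathcal{L}_V g)(Y, \cJ Z) = 0$, and surjectivity of $\cJ$ on $\ker\eta$ together with direct evaluation on $V$ upgrades this to $\mathcal{L}_V g = 0$.

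Identities (3) and (4) are short algebraic consequences. For (3),
\[
(\nabla_X \cJ)(V) = -\cJ(\nabla_X V) = \tfrac{1}{2}\cJ^2 X = -\tfrac{1}{2}X
\]
for horizontal $X$, using $\cJ V = 0$, then (1), then $\cJ^2|_{\ker\eta} = -\mathrm{I}$. For (4), a direct rearrangement gives
\[
(\nabla_V \cJ)(Y) = \nabla_{\cJ Y}V - \cJ(\nabla_Y V) + \mathcal{L}_V \cJ(Y) = -\tfrac{1}{2}\cJ^2 Y + \tfrac{1}{2}\cJ^2 Y + 0 = 0,
\]
using (1) for the first two terms and $\mathcal{L}_V \cJ = 0$ (obtained in the proof of (1)) for the last.

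The main obstacle is (2) for horizontal $X_1, X_2$. The $V$-component is immediate: pairing $(\nabla_{X_1}\cJ)(X_2)$ with $V$, using (1), and exploiting that $\cJ$ is an isometry on $\ker\eta$ gives $\tfrac{1}{2}\langle \cJ X_2, \cJ X_1\rangle = \tfrac{1}{2}\langle X_1, X_2\rangle$. For the horizontal component, I would pass to the $2$-form $\Omega(Y,Z) := \langle Y, \cJ Z\rangle$. Since $\Omega = d\eta$ as a $2$-form on $TM$ (both vanish whenever either argument is $V$), $\Omega$ is closed. The standard Koszul-type identity expressing $\nabla\cJ$ in terms of $d\Omega$ and the four-term Nijenhuis combination
\[
N_\cJ(Y, Z) := -\cJ^2[Y, Z] + \cJ[\cJ Y, Z] + \cJ[Y, \cJ Z] - [\cJ Y, \cJ Z]
\]
then reduces, after imposing $d\Omega = 0$ and taking all arguments horizontal, to $\langle (\nabla_{X_1}\cJ) X_2, X_3\rangle \propto \langle N_\cJ(X_2, X_3), \cJ X_1\rangle$. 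By the Sasakian hypothesis (\ref{Nij}), $N_\cJ(X_2, X_3) = d\eta(X_2, X_3) V$ is vertical, so its inner product with the horizontal vector $\cJ X_1$ vanishes. This settles the horizontal component and completes (2). The delicate step is establishing the Koszul-type identity in the present degenerate setting where $\cJ^2 \neq -\mathrm{I}$ globally; a careful application of the Koszul formula shows it remains valid for horizontal arguments, which is all that is needed here.
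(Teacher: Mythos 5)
The paper itself does not prove Proposition \ref{ind}: it simply cites Blair \cite{Bl} and \cite{Le2}. Your outline is essentially a reconstruction of the standard argument from Blair's book, so in that sense it is ``the paper's approach'' at one remove. Let me assess the individual steps.

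Parts (1), (3), (4), and the $V$-component of (2) are handled correctly. The reduction of (1) to the Killing property of $V$, obtained by antisymmetrizing $\langle\nabla_Y V,Z\rangle$ and matching with $d\eta$, is sound; the specialization of the normality condition to $Y_1=V$ correctly yields $\hor[V,\cJ Y_2]=\cJ\,\hor[V,Y_2]$, and together with $\mathcal{L}_V\eta=0$ this does give $\mathcal{L}_V\cJ=0$ and then $\mathcal{L}_V g=0$ via $\mathcal{L}_V d\eta=0$ and surjectivity of $\cJ$ on $\ker\eta$. Parts (3) and (4) then fall out algebraically as you state, using $(\nabla_V\cJ)(Y)=\nabla_{\cJ Y}V-\cJ(\nabla_YV)+(\mathcal{L}_V\cJ)(Y)$ for (4). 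The $V$-component of $(\nabla_{X_1}\cJ)(X_2)$ is also computed correctly to be $\tfrac12\langle X_1,X_2\rangle$, via (1) and $\langle\cJ X_1,\cJ X_2\rangle=\langle X_1,X_2\rangle$.

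The one genuine soft spot is exactly where you flag it: the horizontal component of (2). You invoke a Koszul-type identity expressing $\langle(\nabla_{X_1}\cJ)X_2,X_3\rangle$ in terms of $d\Omega$ and the Nijenhuis combination, and you correctly observe that with $\Omega=d\eta$ one has $d\Omega=0$, and that the Sasakian hypothesis makes $N_\cJ(X_2,X_3)=d\eta(X_2,X_3)V$ vertical so the pairing with $\cJ X_1$ vanishes. But this identity is precisely Blair's Lemma 6.1 for almost contact metric structures, which in its full form carries additional correction terms beyond the almost Hermitian case, namely terms weighted by $\eta(X)$, $\eta(Y)$, $\eta(Z)$ and an $N^{(2)}$ term. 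It happens that these all vanish when the three arguments are horizontal, and that replacing $N^{(1)}$ by your $N_\cJ$ changes the pairing with $\cJ X_1$ only by a vertical contribution that also dies, so your conclusion is correct. However, asserting ``a careful application of the Koszul formula shows it remains valid for horizontal arguments'' without writing out even a schematic form of that identity leaves the single most technical step of the whole proposition unverified. To close the gap you should either state Blair's Lemma 6.1 precisely and note which terms drop for horizontal arguments, or carry out directly the expansion of $2\langle\nabla_{X_1}(\cJ X_2),X_3\rangle+2\langle\nabla_{X_1}X_2,\cJ X_3\rangle$ via the Koszul formula and show that after using $d\Omega=0$, skewness of $\cJ$, and $\cJ^2=-I+\eta\otimes V$, what survives is a multiple of $\langle N_\cJ(X_2,X_3),\cJ X_1\rangle$. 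As it stands the strategy is right and all the surrounding reasoning is correct, but the proof is incomplete at exactly the hardest point.
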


The Tanaka-Webster connection $\bar\nabla$ is defined by
\[
\bar\nabla_{Y_1}Y_2=\nabla_{Y_1}Y_2+\frac{\left<V,Y_2\right>}{2}\cJ Y_1-\frac{1}{2}\left<\cJ Y_1,Y_2\right>V+\frac{\left<V,Y_1\right>}{2}\cJ Y_2.
\]
The corresponding curvature is the Tanaka-Webster curvature and it is denoted by $\overline\Rm$. 

The Riemann curvature tensor $\Rm$ and the Tanaka-Webster curvature $\overline\Rm$ are related as follows.

\begin{prop}\label{Rm}
Assume that the structure $(J,V,\eta,\left<\cdot,\cdot\right>)$ is Sasakian. Then
\begin{enumerate}
\item $\Rm(Y_1,Y_2)V=\frac{\left<Y_2,V\right>}{4}(Y_1)_\hor-\frac{\left<Y_1,V\right>}{4}(Y_2)_\hor$,
\item $\overline{\Rm}(X_2,X_3)X_1=\Rm(X_2,X_3)X_1+\frac{\left<\cJ X_3,X_1\right>}{4}\cJ X_2\\ -\frac{\left<\cJ X_2,X_1\right>}{4}\cJ X_3-\frac{\left<\cJ X_2,X_3\right>}{2}\cJ X_1$,
\item $\overline\Rm(Y_1,Y_2)V=0$,
\item $\overline\Rm(X_1,V)X_2=0$,
\end{enumerate}
for all vector fields $Y_1$, $Y_2$ and all horizontal vector fields $X_1$, $X_2$, $X_3$.  
\end{prop}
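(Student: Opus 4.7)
The plan is to derive all four identities directly from the Sasakian structure equations collected in Proposition \ref{ind} and the definition of the Tanaka-Webster connection $\bar\nabla$, using the standard curvature formula
\[
\Rm(Y_1,Y_2)Z=\nabla_{Y_1}\nabla_{Y_2}Z-\nabla_{Y_2}\nabla_{Y_1}Z-\nabla_{[Y_1,Y_2]}Z
\]
and the analogous one for $\overline{\Rm}$ with $\bar\nabla$ in place of $\nabla$.

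For (1), I would plug $Z=V$ into the curvature formula and apply Proposition \ref{ind}(1) to rewrite $\nabla_{Y_i}V=-\tfrac{1}{2}\cJ Y_i$. After expanding $\nabla_{Y_1}(\cJ Y_2)=(\nabla_{Y_1}\cJ)Y_2+\cJ(\nabla_{Y_1}Y_2)$ and using torsion-freeness of $\nabla$ to cancel the $\cJ(\nabla_{Y_i}Y_j)$ terms against $\cJ([Y_1,Y_2])$, the identity collapses to
\[
\Rm(Y_1,Y_2)V=\tfrac{1}{2}\bigl((\nabla_{Y_2}\cJ)Y_1-(\nabla_{Y_1}\cJ)Y_2\bigr).
\]
Then I would decompose each $Y_i$ into its horizontal part $(Y_i)_\hor$ and its vertical part $\langle Y_i,V\rangle V$ and evaluate $(\nabla_{Y}\cJ)Y'$ on the four resulting pieces via parts (2)--(4) of Proposition \ref{ind}. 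The $\cJ$-independent pieces symmetric in the indices drop out, and the remaining terms collect to the stated expression.

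For (2), the strategy is to expand $\overline{\Rm}(X_2,X_3)X_1$ using the defining formula for $\bar\nabla$ applied to $X_1$, $X_2$, $X_3$ horizontal. Writing $\bar\nabla_{Y_1}Y_2=\nabla_{Y_1}Y_2+T(Y_1,Y_2)$ with the correction
\[
T(Y_1,Y_2)=\tfrac{\langle V,Y_2\rangle}{2}\cJ Y_1-\tfrac{1}{2}\langle\cJ Y_1,Y_2\rangle V+\tfrac{\langle V,Y_1\rangle}{2}\cJ Y_2,
\]
I would separate $\overline{\Rm}=\Rm+\text{(correction terms)}$. The correction terms involve $\nabla V$, $\nabla\cJ$, and brackets of horizontal fields, all of which are handled by Proposition \ref{ind} together with $\langle V,X_i\rangle=0$ and $[X_2,X_3]_\ver=-\langle\cJ X_2,X_3\rangle V$ (which follows from \eqref{conmet} and the fact that $d\eta(X_2,X_3)=-\eta([X_2,X_3])$). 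This computation is the main obstacle: several terms must be tracked, but each reduces by Proposition \ref{ind} to a multiple of $\cJ X_i$ or $V$, and repeated cancellations yield exactly the three correction terms in the claim.

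For (3), I would write $\overline{\Rm}(Y_1,Y_2)V$ by the same expansion $\overline{\Rm}=\Rm+(\text{corrections})$. Using (1) for the $\Rm(Y_1,Y_2)V$ contribution and noting that all correction terms coming from $\bar\nabla V$ simplify through Proposition \ref{ind}(1) and $\cJ V=0$, the horizontal parts produced by (1) cancel against the correction contributions, giving $0$. Finally, (4) follows from (2) applied to $X_3=V$ formally, or more cleanly by direct expansion using $\bar\nabla_V X_2$ and $\bar\nabla_{X_1}V$; the key input is that $\bar\nabla V\equiv 0$ on Sasakian manifolds, which follows immediately from Proposition \ref{ind}(1) and the definition of $\bar\nabla$, so the Tanaka-Webster curvature annihilates $V$ in its last slot whenever one of the first two is vertical. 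I expect (2) to be the main obstacle, as it demands careful bookkeeping of six distinct terms arising from the two correction tensors together with the bracket contribution $\bar\nabla_{[X_2,X_3]}X_1$.
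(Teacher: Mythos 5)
The paper does not actually prove Proposition \ref{Rm}: it is stated without argument and implicitly attributed to the same references \cite{Bl, Le2} cited just above for Proposition \ref{ind}. So there is no in-paper proof to compare against, and your direct verification via the curvature formula is a legitimate and reasonable route. Your treatment of (1) is correct: the reduction to $\Rm(Y_1,Y_2)V=\tfrac{1}{2}\bigl((\nabla_{Y_2}\cJ)Y_1-(\nabla_{Y_1}\cJ)Y_2\bigr)$ is right (the $\cJ(\nabla Y)$ terms do cancel against $\cJ[Y_1,Y_2]$ by torsion-freeness), and decomposing into horizontal and vertical parts via Proposition \ref{ind}(2)--(4) yields the claimed formula. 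The strategy for (2) is sound in outline, though you have not carried it out; and (3) is in fact immediate once you observe $\bar\nabla V\equiv 0$, since then each of the three terms in $\overline\Rm(Y_1,Y_2)V=\bar\nabla_{Y_1}\bar\nabla_{Y_2}V-\bar\nabla_{Y_2}\bar\nabla_{Y_1}V-\bar\nabla_{[Y_1,Y_2]}V$ vanishes individually -- you do not need to invoke (1) and track cancellations.

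The genuine gap is in (4). You cannot apply (2) with $X_3=V$ because (2) requires $X_3$ horizontal, and the inner products $\langle\cJ X_3,X_1\rangle$, $\langle\cJ X_2,X_3\rangle$ and the term $\Rm(X_2,X_3)X_1$ in (2) were all derived under that hypothesis. More importantly, your fallback reasoning is circular with (3): you write that $\bar\nabla V\equiv 0$ implies ``the Tanaka-Webster curvature annihilates $V$ in its last slot whenever one of the first two is vertical,'' but in $\overline\Rm(X_1,V)X_2$ the last slot is the \emph{horizontal} vector $X_2$, not $V$. The fact $\bar\nabla V\equiv 0$ controls the curvature when $V$ sits in the argument being differentiated (which gives (3)), but says nothing about $\overline\Rm(\cdot,V)$ acting on a horizontal vector. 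Proving (4) requires actually expanding $\bar\nabla_{X_1}\bar\nabla_V X_2-\bar\nabla_V\bar\nabla_{X_1}X_2-\bar\nabla_{[X_1,V]}X_2$; the computation uses $\bar\nabla_V X=\nabla_V X+\tfrac12\cJ X$, the horizontality of $[X_1,V]$ (from the Reeb condition), and crucially the Sasakian identity $\nabla_V\cJ=0$ of Proposition \ref{ind}(4), which is the geometric content behind the vanishing of the pseudohermitian torsion. As written, your argument for (4) does not go through.
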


\begin{proof}[Proof of Theorem \ref{main}]
We start off with the following simple lemma:

\begin{lem}\label{conserve}
Assume that the manifold is Sasakian. Then 
\begin{enumerate}
\item $\frac{d}{dt}\left<\nabla f_t,V\right>_{\varphi_t}=0$, 
\item $\frac{d}{dt}|\nabla_H f_t|^2_{\varphi_t}=0$, 
\item $\frac{D^2}{dt^2}\varphi_t=\left<\nabla f_t,V\right>_{\varphi_t}\mathcal J\nabla f_t(\varphi_t)$. 
\end{enumerate}
\end{lem}

\begin{proof}
By differentiating the equation (\ref{HJ}) and applying Proposition \ref{ind}, we obtain 
\[
\begin{split}
\left<\nabla\dot f_t, Y\right>&=-\left<\nabla^2 f_t(\nabla f_t),Y\right>+\left<\nabla f_t,V\right>(\left<\nabla^2 f_t(Y),V\right>+\left<\nabla f_t,\nabla_Y V\right>)\\
&=-\left<\nabla^2 f_t(\nabla_H f_t) ,Y\right>+\frac{1}{2}\left<\nabla f_t,V\right>\left<\mathcal J\nabla f_t, Y\right>.  
\end{split}
\]
It follows from this and (\ref{flow}) that $\frac{d}{dt}\left<\nabla f_t,V\right>_{\varphi_t}=0$. Therefore, by Proposition \ref{ind} again, 
\[
\begin{split}
\frac{D^2}{dt^2}\varphi_t(x)&=\frac{1}{2}\left<\nabla f_t,V\right>_{\varphi_t}\mathcal J\nabla f_t(\varphi_t)-\left<\nabla f_t,V\right>_{\varphi_t}\nabla V(\dot \varphi_t)\\
&=\left<\nabla f_t,V\right>_{\varphi_t}\mathcal J\nabla f_t(\varphi_t). 
\end{split}
\]
The second assertion follows from similar arguments .
\end{proof}

Next, we define a family of orthonormal frames similar to the one in the previous section. Similar orthonormal frames were first introduced by the author in \cite{Le3}. It was also used in \cite{Le2, Le4, Le5} in many other geometric situations. 

\begin{lem}\label{frame}
There is a family of orthonormal frame $\{v_0(t), ..., v_{2n}(t)\}$ defined along the path $t\mapsto\varphi_t(x)$ such that $v_0(t)=V(\varphi_t)$ and $\dot v(t)=W(t)v(t)$, where 
\[
W=\left(
       \begin{array}{cccc}
         0 & 0 & -\frac{|\nabla_Hf_0|}{2} & 0 \\
         0 & 0 & \left<\nabla f_0, V\right> & 0 \\
         \frac{|\nabla_Hf_0|}{2} & -\left<\nabla f_0, V\right> & 0 & 0 \\
         0 & 0 & 0 & O_{2n-2} \\
       \end{array}
     \right).
\]
\end{lem}

\begin{proof}
Let $v_0(t)=V(\varphi_t)$, $v_1(t)=\frac{1}{|\nabla_H f_0|}\nabla_Hf_t(\varphi_t)$, and $v_2(t)=\frac{1}{|\nabla_H f_0|}\mathcal J \nabla f_t(\varphi_t)$. 

It follows that from Proposition \ref{ind} that
\[
\begin{split}
\frac{D}{dt} v_0(t)&=\nabla V(\dot \varphi_t)=-\frac{1}{2}\mathcal J\nabla f_t(\varphi_t)=-\frac{|\nabla_H f_0|}{2}v_2(t). 
\end{split}
\]

It follows from Lemma \ref{conserve} that 
\[
\begin{split}
\frac{D}{dt} v_1(t)=\frac{1}{|\nabla_H f_0|}\frac{D^2}{dt^2}\varphi_t=\frac{\left<\nabla f_0,V\right>}{|\nabla_H f_0|}\mathcal J\nabla f_t(\varphi_t)=\left<\nabla f_0,V\right>v_2(t)
\end{split}
\]

It also follows from Proposition \ref{ind} that 
\[
\begin{split}
&\frac{D}{dt} v_2(t)=\frac{1}{|\nabla_H f_0|}\left(\nabla_{\dot\varphi_t}\mathcal J(\nabla f_t(\varphi_t))+\mathcal J\nabla\dot f_t(\varphi_t)+\mathcal J\nabla^2 f_t(\nabla_H f_t(\varphi_t))\right)\\
&=\frac{1}{|\nabla_H f_0|}\Big(\nabla_{\nabla_H f_t}\mathcal J(\nabla_H f_t(\varphi_t))\\
&+\left<\nabla f_t,V\right>_{\varphi_t}\nabla_{\nabla_H f_t}\mathcal J(V(\varphi_t))-\frac{1}{2}\left<\nabla f_t,V\right>_{\varphi_t}\nabla_H f_t(\varphi_t)\Big)\\
&=\frac{1}{2}|\nabla_Hf_0|V(\varphi_t)-\frac{\left<\nabla f_0,V\right>}{|\nabla_H f_0|}\nabla_H f_t(\varphi_t) \\
&=\frac{1}{2}|\nabla_Hf_0|v_0(t)-\left<\nabla f_0,V\right>v_1(t). 
\end{split}
\]
Finally, we can choose $v_3(t),...,v_{2n}(t)$ such that $\frac{D}{dt} v_i(t)$ is contained in the span of $v_0(t), v_1(t), v_2(t)$ for each $i=3,...,2n$. This can be achieved by first extending the frame $\{v_0(t), v_1(t), v_2(t)\}$ to an orthonormal  frame $\{v_0(t), v_1(t), v_2(t), w_3(t), ..., w_{2n}(t)\}$. Then let $v_i(t)=\sum_{j=3}^{2n}O_{ij}(t)w_j(t)$ such that $O(t)$ is an orthogonal matrix satisfying $\dot O(t)=-O(t)\Lambda(t)$, where $\Lambda_{ij}(t)=\left<\frac{D}{dt} w_i(t),w_j(t)\right>$. It follows that $\frac{D}{dt} v_i(t)$ is orthogonal to $v_k(t)$ for each $i, k=3,...,2n$. It also follows that 
\[
\left<\frac{D}{dt} v_i(t),v_j(t)\right>=-\left<v_i(t),\frac{D}{dt} v_j(t)\right>=0
\] 
for each $j=0,1,2$ and $i=3,...,2n$. Therefore, $\frac{D}{dt} v_i(t)=0$. It also follows from this and Proposition \ref{ind} that one can pick $v_{2i}(t)=\cJ v_{2i-1}(t)$. 
\end{proof}

\begin{lem}\label{Ricc}
Let $S(t)$ be the matrix defined by 
\[
S_{ij}(t)=\left<\nabla^2 f_t(v_i(t)),v_j(t)\right>. 
\]
Then $S(t)$ satisfies the following matrix Riccati equation 
\[
\begin{split}
&\dot S(t)-US(t)-S(t)U^T+\bar R(t)+E+S(t)DS(t)=0,
\end{split}
\]
with the condition $\lim_{t\to 1}S(t)^{-1}=0$, where $a=|\nabla_H f_0|$, $b=\left<\nabla f_0,V\right>$, $\bar R_{ij}(t)=\left<\bar Rm(v_i(t)),\nabla_H f_t)\nabla_H f_t,v_j(t)\right>$,  
\[
\begin{split}
&E=\left(\begin{array}{cc}
E_1 & O\\
O & \frac{b^2}{4}I_{2n-2}
\end{array}\right), 
U=\left(\begin{array}{cc}
U_1 & O\\
O & -\frac{b}{2}J_{2n-2}
\end{array}\right), D=\left(\begin{array}{cc}
0  & 0\\
0 & I_{2n}
\end{array}\right), \\
&E_1=\left(\begin{array}{ccc}
\frac{a^2}{4} & -\frac{ab}{4} & 0 \\
-\frac{ab}{4} & \frac{b^2}{4} & 0\\
0 & 0 & -a^2+\frac{b^2}{4}
\end{array}\right), U_1=\left(
       \begin{array}{cccc}
         0 & 0 & -\frac{a}{2} \\
         0 & 0 & \frac{b}{2} \\
        a & -\frac{b}{2} & 0
       \end{array}
     \right), 
\end{split}
\]
and $J_{2n-2}$ is the $(2n-2)\times (2n-2)$ matrix such that the $ij$-th entry is given by $\left<\cJ(v_{i+2}(t),v_{j+2}(t)\right>$
\end{lem}

\begin{proof}
By differentiating (\ref{HJ}), 
\[
\begin{split}
&\nabla^2\dot f_t(v,w)+\nabla^3 f_t(w, \nabla f_t,v)+\nabla^2 f_t(\nabla^2 f_t(w),v)\\
&-\left<\nabla^2 f_t(w),V\right>\nabla^2 f_t(V,v)-\left<\nabla f_t,\nabla V(w)\right>\nabla^2 f_t(V,v)\\
&-\left<\nabla f_t,V\right>\nabla^3 f_t(w, V,v)-\left<\nabla f_t,V\right>\nabla^2 f_t(\nabla V(w) ,v)\\
&-\frac{1}{2}\left<\nabla^2 f_t(w),V\right>\left<\mathcal J\nabla f_t,v\right>-\frac{1}{2}\left<\nabla f_t,\nabla V(w)\right>\left<\mathcal J\nabla f_t,v\right>\\
&-\frac{1}{2}\left<\nabla f_t,V\right>\left<(\nabla_w \mathcal J)\nabla f_t,v\right>-\frac{1}{2}\left<\nabla f_t,V\right>\left<\mathcal J\nabla^2 f_t(w),v\right>=0. 
\end{split}
\]

By the Ricci identity and Proposition \ref{ind}, 
\[
\begin{split}
&\nabla^2\dot f_t(v,w)+\nabla^3 f_t(\nabla_H f_t,w,v)-\left<\Rm(w,\nabla_H f_t)v, \nabla f_t\right>\\
&+\nabla^2 f_t(\nabla^2 f_t(w),v)-\nabla^2 f_t(V, w)\nabla^2 f_t(V,v)\\
&-\frac{1}{2}\left<\mathcal J\nabla f_t,w\right>\nabla^2 f_t(V,v)+\frac{1}{2}\left<\nabla f_t,V\right>\nabla^2 f_t(\mathcal J(w) ,v)\\
&-\frac{1}{2}\left<\nabla^2 f_t(w),V\right>\left<\mathcal J\nabla f_t,v\right>+\frac{1}{4}\left<\nabla f_t,\mathcal J(w)\right>\left<\mathcal J\nabla f_t,v\right>\\
&-\frac{1}{2}\left<\nabla f_t,V\right>\left<(\nabla_w \mathcal J)\nabla f_t,v\right>-\frac{1}{2}\left<\nabla f_t,V\right>\left<\mathcal J\nabla^2 f_t(w),v\right>=0. 
\end{split}
\]

By setting $v=v_j(t)$ and $w=v_i(t)$, 
\[
\begin{split}
&\nabla^2\dot f_t(v_i(t), v_j(t))+\nabla^3 f_t(\nabla_H f_t,v_i(t),v_j(t))+R_{ij}(t)\\
&+b\left<\Rm(v_i(t),\nabla_H f_t)V, v_j(t)\right>+\sum_{k=0}^{2n}S_{ik}S_{kj}-S_{i0}S_{0j}\\
&-\frac{a}{2}\delta_{2i}S_{0j} +\frac{b}{2}\sum_{k=0}^{2n}J_{ik}S_{kj}-\frac{a}{2}S_{i0}\delta_{2j}-\frac{a^2}{4}\delta_{2i}\delta_{2j}\\
&-\frac{b}{2}\left<(\nabla_{v_i(t)} \mathcal J)\nabla f_t,v_j(t)\right>-\frac{b}{2}\sum_{k=0}^{2n}S_{ik}J_{kj}=0,
\end{split}
\]
where $R_{ij}(t)=\left<\Rm(v_i(t),\nabla_H f_t)\nabla_H f_t,v_j(t)\right>$. 

Next, we apply Proposition \ref{ind} and \ref{Rm} to the above equation. 
\[
\begin{split}
&\nabla^2\dot f_t(v_i(t), v_j(t))+\nabla^3 f_t(\nabla_H f_t,v_i(t),v_j(t))+R_{ij}(t)-\frac{ab}{4}\delta_{0i}\delta_{1j}\\
&+\sum_{k=1}^{2n}S_{ik}S_{kj}-\frac{a}{2}\delta_{2i}S_{0j}+\frac{b}{2}\sum_{k=0}^{2n}J_{ik}S_{kj}-\frac{a}{2}S_{i0}\delta_{2j}\\
&-\frac{a^2}{4}\delta_{2i}\delta_{2j}+\frac{b^2}{4}\left<v_i(t)_{\hor},v_j(t)\right>-\frac{ab}{4}\delta_{i1}\delta_{0j}-\frac{b}{2}\sum_{k=0}^{2n}S_{ik}J_{kj}=0, 
\end{split}
\]
where $v_i(t)_\hor$ denotes the horizontal part of $v_i(t)$. 

In the matrix notations, 
\[
\begin{split}
&\dot S(t)-WS(t)-S(t)W^T+R(t)+C+S(t)DS(t)\\
&-\frac{a}{2}GS(t)+\frac{b}{2}JS(t)-\frac{a}{2}S(t)G^T+\frac{b}{2}S(t)J^T=0, 
\end{split}
\]
where $C=\left(\begin{array}{cc}
C_1 & O\\
O & \frac{b^2}{4}I_{2n-2}
\end{array}\right)$, $G=\left(\begin{array}{cc}
G_1 & O\\
O & O_{2n-2}
\end{array}\right)$, 

$C_1=\left(\begin{array}{ccc}
0 & -\frac{ab}{4} & 0 \\
-\frac{ab}{4} & \frac{b^2}{4} & 0 \\
0 & 0 & -\frac{a^2}{4}+\frac{b^2}{4}
\end{array}\right)$, and $G_1=\left(\begin{array}{cccc}
0 & 0 & 0 \\
0 & 0 & 0 \\
1 & 0 & 0 
\end{array}\right)$. 

On the other hand, it follows from Proposition \ref{Rm} that 
\[
R(t)=\bar R(t)+\left(\begin{array}{cccc}
\frac{1}{4}|\nabla_H f_t|^2_{\varphi_t} & 0 & 0 & 0\\
0 & 0 & 0 & 0\\
0 & 0 & -\frac{3}{4}|\nabla_H f_t|^2_{\varphi_t} & 0\\
0 & 0 & 0 & O
\end{array}\right). 
\]

Therefore, the result follows. 
\end{proof}

\begin{lem}\label{asy}
The matrix $S(t)$ satisfies 
\[
S(1-t)=\frac{1}{t^3}S^{(-3)}+\frac{1}{t^2}S^{(-2)}+\frac{1}{t}S^{(-1)}+o(1), 
\]
as $t\to 0$, where 
$S^{(-3)}=\left(\begin{array}{cccc}
-\frac{12}{a^2} & 0 & 0 & 0\\
0 & 0 & 0 & 0\\
0 & 0 & 0 & 0\\
0& 0 & 0 & O_{2n-2} 
\end{array}\right)$, $S^{(-2)}=\left(\begin{array}{cccc}
0 & 0 & \frac{6}{a} & 0\\
0 & 0 & 0 & 0\\
\frac{6}{a} & 0 & 0 & 0\\
0& 0 & 0 & O_{2n-2}\\
\end{array}\right)$, and $S^{(-1)}=\left(\begin{array}{cccc}
c & -\frac{b}{a} & 0 & 0\\
-\frac{b}{a} & -1 & 0 & 0\\
0 & 0 & -4 & 0\\
0& 0 & 0 & -I_{2n-2}\\
\end{array}\right)$ for some constant $c$ independent of time. 
\end{lem}

\begin{proof}
Let $T(t)=-S(1-t)^{-1}$. Then $T$ satisfies $T(0)=0$ and 
\[
\begin{split}
&\dot T(t)=T(t)U+U^TT(t)+T(t)(\bar R(1-t)+E)T(t)+D.
\end{split}
\]
$\bar R(1-t)=\bar R^{(0)}+t\bar R^{(1)}+t^2\bar R^{(2)}+o(t^3)$. 

A computation shows that $T(t)=tT^{(1)}+t^2T^{(2)}+t^3T^{(3)}+o(t^4)$ as $t\to 0$, where 
$T^{(1)}=D$, 
\[
T^{(2)}=\frac{1}{2}(T^{(1)}U+U^TT^{(1)})=\left(\begin{array}{cccc}
0 & 0 & \frac{a}{2} & 0\\
0 & 0 & 0 & 0\\
\frac{a}{2} & 0 & 0 & 0\\
0 & 0 & 0 & O_{2n-2}
\end{array}\right), 
\]
\[
\begin{split}
&T^{(3)}=\frac{1}{3}(T^{(2)}U+U^TT^{(2)}+T^{(1)}(\bar R(0)+E)T^{(1)})\\
&=\left(\begin{array}{cccc}
\frac{a^2}{3} & -\frac{ab}{12} & 0 & 0\\
-\frac{ab}{12} & \frac{b^2}{12} & 0 & 0\\
0 & 0 & -\frac{a^2}{2}+\frac{b^2}{12}+\frac{\bar R_{22}(1)}{3} & \frac{1}{3}\bar R_{23}(1)\\
0 & 0 & \frac{1}{3}\bar R_{23}(1) & \frac{b^2}{12}I_{2n-2}+\frac{1}{3}\bar R_{33}(1)
\end{array}\right), 
\end{split}
\] 
and 
\[
\bar R(t)=\left(\begin{array}{ccc}
O_{2} & O & O\\
O & \bar R_{22}(t) & \bar R_{23}(t)\\
O & \bar R_{23}(t) & \bar R_{33}(t)\\
\end{array}\right).
\]
Here $\bar R_{33}(t)$ is of size $(2n-2)\times(2n-2)$. 

Since $tT^{(1)}+t^2T^{(2)}+t^3T^{(3)}$ is invertible for $t$ small enough, the result follows. 
\end{proof}

Let $S(t)=\left(\begin{array}{cc}
S_1(t) & S_2(t)\\
S_2(t)^T & S_3(t)
\end{array}\right)$, where $S_1(t)$ is of size $3\times 3$.Then $S_1(t)$ and $S_3(t)$ satisfy
\begin{equation}\label{S1}
\begin{split}
&\dot S_1(t)-U_1S_1(t)-S_1(t)(U_1)^T+\bar R_1(t)\\
&+E_1+S_1(t)D_1 S_1(t)+S_2(t)S_2(t)^T = 0
\end{split}
\end{equation}
and 
\begin{equation}\label{S3}
\begin{split}
&\dot S_3(t)-U_4S_3(t)-S_3(t)U_4^T+\bar R_{33}(t)+E_3+S_2^TD_1S_2+S_3^2=0. 
\end{split}
\end{equation}

Then $s_3(t)=\tr(S_3(t))$ satisfies 
\begin{equation}\label{s3}
\dot s_3(t)+\tr(\bar R_3(t))+\frac{1}{2n-2}s_3(t)^2\leq 0. 
\end{equation}

Let $S_1(t)=\left(\begin{array}{ccc}
S_{11}(t) & S_{12}(t) & S_{13}(t)\\
S_{12}(t) & S_{22}(t) & S_{23}(t)\\
S_{13}(t) & S_{23}(t) & S_{33}(t)\\
\end{array}\right)$. Then $S_{22}(t)$ satisfies 
\[
\dot S_{22}(t)\leq-S_{22}(t)^2-S_{23}(t)^2+bS_{23}(t)-\frac{b^2}{4}\leq -S_{22}(t)^2. 
\]

Since $S_{22}(1-t)=-\frac{1}{t}+o(1)$ as $t\to 0$ and $-\frac{1}{1-t}$ satisfies the equality case of the above inequality. It follows that 
\begin{equation}\label{S22}
S_{22}(t)\geq -\frac{1}{1-t}.
\end{equation} 

By (\ref{S1}), the matrix $\tilde S(t)=\left(\begin{array}{cc}
S_{11}(t)  & S_{13}(t)\\
S_{13}(t) & S_{33}(t)\\
\end{array}\right)$ satisfies 
\[
\dot{\tilde S}(t)\leq -\tilde S(t)C_0\tilde S(t)+C_1\tilde S(t)+\tilde S(t)C_1^T+C_2-\bar R_{22}(t)C_0,
\]
where $C_0=\left(\begin{array}{cc}
0  & 0\\
0 & 1
\end{array}\right)$, 
$C_1=\left(\begin{array}{cc}
0 & -a/2\\
a & 0
\end{array}\right)$, and $C_2=\left(\begin{array}{cc}
-a^2/4 & 0\\
0 & a^2
\end{array}\right)$. 

Let $C_3=\left(\begin{array}{cc}
0 & a/2\\
a/2 & 0
\end{array}\right)$ and $C_4=\left(\begin{array}{cc}
0 & 0\\
a & 0
\end{array}\right)$. Then 
\[
\begin{split}
&\frac{d}{dt}\left(\tilde S(t) +C_3\right)\leq -(\tilde S(t) +C_3)C_0(\tilde S(t) +C_3)\\
&+C_4(\tilde S(t) +C_3)+(\tilde S(t) +C_3)C_4^T-\bar R_{22}(t)C_0. 
\end{split}
\]

By combining this with (\ref{s3}) and using the assumption that the Tanaka-Webster Ricci curvature is non-negative, we obtain 
\[
\begin{split}
&\frac{d}{dt}\left(\tilde S(t) +C_3+s_3C_0\right)\\
&\leq -(\tilde S(t)+C_3)C_0(\tilde S(t)+C_3)-\frac{1}{2n-2}s_3^2C_0\\
&+C_4(\tilde S(t)+C_3+s_3C_0)+(\tilde S(t)+C_3+s_3C_0)C_4^T. 
\end{split}
\]

It follows that 
\[
\begin{split}
&\frac{d}{dt}\left(\tilde S(t)+C_3+s_3C_0\right)\leq (\tilde S(t)+C_3)C_0K +KC_0(\tilde S(t) +C_3)+KC_0K\\
&+C_4(\tilde S(t) +C_3+s_3C_0)+(\tilde S(t) +C_3+s_3C_0)C_4^T-\frac{1}{2n-2}s_3^2C_0
\end{split}
\]
where $K$ is a matrix. 

Let $\bar S=\tilde S(t)+C_3+s_3C_0$. Then the above equation becomes
\begin{equation}\label{SCC}
\begin{split}
&\frac{d}{dt}\bar S(t)\leq \bar S(t)(C_0K+C_4^T) +(KC_0+C_4)\bar S(t)\\
&+KC_0K-s_3C_0K-s_3KC_0-\frac{1}{2n-2}s_3^2C_0. 
\end{split}
\end{equation}

Let $K=\left(\begin{array}{cc}
\frac{12}{a^2(1-t)^3} & -\frac{6}{a(1-t)^2}\\
 -\frac{6}{a(1-t)^2}  & \frac{4}{1-t}
\end{array}\right)$. Since $\left(\begin{array}{cc}
\frac{72(n-1)}{ca^2(1-t)^4} & \frac{6s_3}{a(1-t)^2}\\
\frac{6s_3}{a(1-t)^2} & \frac{cs_3^2}{2n-2}
\end{array}\right)$ with $c>0$ is non-negative definite, it follows that 
\[
\begin{split}
&\frac{d}{dt}\bar S(t)\leq \bar S(t)(C_0K+C_4^T) +(KC_0+C_4)\bar S(t)\\
&+KC_0K-\frac{s_38}{1-t}C_0-\frac{1-c}{2n-2}s_3^2C_0+\frac{72(n-1)}{ca^2(1-t)^4}C_5\\
&\leq \bar S(t)(C_0K+C_4^T) +(KC_0+C_4)\bar S(t)\\
&+KC_0K+\frac{32(n-1)}{(1-c)(1-t)^2}C_0+\frac{72(n-1)}{ca^2(1-t)^4}C_5, 
\end{split}
\]
where $C_5=\left(\begin{array}{cc}
1 & 0\\
0 & 0
\end{array}\right)$ and $1>c>0$. 

On the other hand, 
\[
S_0(t)=\left(\begin{array}{cc}
\frac{12(26-26n+5c-6nc+c^2)}{(1-c)ca^2(1-t)^3} & -\frac{6(14-14n+c-2nc+c^2)}{(1-c)ca(1-t)^2}\\
-\frac{6(14-14n+c-2nc+c^2)}{(1-c)ca(1-t)^2} & -\frac{4[(c+6)(n-1)+c(n-c)]}{(1-c)c(1-t)}
\end{array}\right)
\]
satisfies 
\[
\begin{split}
&\frac{d}{dt}S_0(t) = S_0(t)(C_0K+C_4^T) +(KC_0+C_4)S_0(t)\\
&+KC_0K+\frac{32(n-1)}{(1-c)(1-t)^2}C_0+\frac{72(n-1)}{ca^2(1-t)^4}C_5. 
\end{split}
\]

Therefore, $F(t)=\bar S(1-t)-S_0(1-t)$ satisfies 
\[
\dot F\geq -F(C_0K-C_4^T) -(KC_0-C_4)F. 
\]

A computation using Lemma \ref{asy} and definition of $S_0(t)$ shows that $F\geq 0$ for $t$ small enough. It follows that $F\geq 0$ on $[0,1]$ (see \cite[Proposition 1]{Ro}). Therefore, $\bar S\geq S_0$. Hence, (\ref{Lap}) follows from this and (\ref{S22}). 
\end{proof}

We finish this section by recording the following Laplacian comparison theorem. It is an immediate consequence of (\ref{Lap}). 

\begin{cor}\label{LapCom}
Under the assumptions of Theorem \ref{main}, the following holds on the set where $f$ is twice differentiable:
\[
\Delta_H f\leq 2N,
\]
where $f:M\to\Real$ is defined by $f(x)=d^2_S(x_0,x)$. 
\end{cor}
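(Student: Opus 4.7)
The corollary should follow almost immediately from the estimate~(\ref{Lap}) established in the proof of Theorem~\ref{main}, so the plan is essentially a direct rewriting at $t=0$.

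First, I would recall that the family of potentials used in Section~\ref{Sub} is $f_t(x)=-\frac{1}{2(1-t)}d_S^2(x_0,x)$, so in particular $f_0(x)=-\frac{1}{2}d_S^2(x_0,x)=-\frac{1}{2}f(x)$. Since $\Delta_H$ is a linear differential operator, the points where $f$ is twice differentiable coincide with the points of $Z$ where $f_0$ is twice differentiable, and on this set $\Delta_H f_0=-\frac{1}{2}\Delta_H f$.

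Next, I would specialize~(\ref{Lap}) to $t=0$. Since $\varphi_0$ is the identity map, this yields
\[
\Delta_H f_0(x)\geq -N
\]
for every $x\in Z$. Substituting the identity $\Delta_H f_0=-\frac{1}{2}\Delta_H f$ gives $\Delta_H f(x)\leq 2N$ on $Z$, which is the desired bound.

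The only potential subtlety is making sure that the hypothesis in~(\ref{Lap}) (twice differentiability along the full trajectory $\varphi_t(x)$) does not impose more than twice differentiability of $f$ at $x$; but at $t=0$ we only need differentiability at $x$ itself, and $f$ is twice differentiable at $x$ by assumption. So no genuine obstacle arises — the corollary is just the $t=0$ instance of the pointwise sub-Laplacian lower bound, rewritten in terms of $f$ instead of $f_0$.
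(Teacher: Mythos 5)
Your proof is correct and is exactly the argument the paper intends: the corollary is stated as ``an immediate consequence of (\ref{Lap})'', and your derivation — setting $t=0$ in (\ref{Lap}), using $\varphi_0=\mathrm{id}$, and rescaling via $f_0=-\tfrac12 f$ to convert $\Delta_H f_0\geq -N$ into $\Delta_H f\leq 2N$ — is precisely that immediate consequence.
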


\smallskip

\section{The Riemannian case}\label{Riem}

In this section, we consider the case when the distance function $d_\e$ of the metric measure space is given by the Riemannian metric $\left<\cdot,\cdot\right>^\e$ extending the contact sub-Riemannian one and satisfying the conditions 
\[
|V|^\e=\e\quad \text{and}\quad \left<V,X\right>^\e=0
\]
for all horizontal vector fields $X$. 

The gradient $\nabla^\e f$ of $f$ with respect to the metric $\left<\cdot,\cdot\right>^\e$ is given by 
\[
\nabla^\e f=\nabla_H f+\frac{1}{\e^2}\left<\nabla f,V\right>V=\nabla f+\frac{1-\e^2}{\e^2}\left<\nabla f,V\right>V. 
\]
Let $f_t^\e(x)=-\frac{1}{2(1-t)}d^2_\e(x_0,x)$. It satisfies the following Hamilton-Jacobi equation
\begin{equation}\label{HJe}
\begin{split}
&\dot f_t^\e+\frac{1}{2}\left(|\nabla f^\e|^2+\frac{1-\e^2}{\e^2}\left<\nabla f^\e,V\right>^2\right)\\
&=\dot f_t^\e+\frac{1}{2}\left(|\nabla_H f^\e|^2+\frac{1}{\e^2}\left<\nabla f^\e,V\right>^2\right)=0.
\end{split}
\end{equation}

For each point $x$ in the set where $d^2_\e$ is twice differentiable, let $t\mapsto \varphi_t^\e(x)$ be the unique geodesic starting from $x$ and ending at $x_0$. By the method of characteristics, 
\begin{equation}\label{flowe}
\dot\varphi_t^\e(x)=\left(\nabla f_t^\e+\frac{1-\e^2}{\e^2}\left<\nabla f_t^\e,V\right>V\right)(\varphi^\e_t(x)). 
\end{equation}

By the same arguments as in Section \ref{Sub}, in order to show the measure contraction property $MCP(0,N)$, it is enough to prove the following condition holds $\vol$ a.e.
\begin{equation}\label{vole}
\det(d\varphi_t^\e)\geq (1-t)^N. 
\end{equation}

We first show the following. 

\begin{lem}\label{masse}
Let $x$ be a point where $\varphi_t$ is differentiable. Then 
\[
\det(\varphi_t^\e)=\exp\left(\int_0^t\Delta f^\e_s(\varphi_s)+\frac{1-\e^2}{\e^2}\left<\nabla^2 f_s^\e(V),V\right>_{\varphi_s}ds\right)
\]
holds at $x$. 
\end{lem}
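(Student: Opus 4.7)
The plan is to mirror the derivation of the Jacobian identity (\ref{Jac}) in Section \ref{Sub}, now adapted to the extended Riemannian flow. First I would pick a family of orthonormal frames $\{v_0(t), v_1(t), \ldots, v_{2n}(t)\}$ along $t \mapsto \varphi_t^\e(x)$, orthonormal with respect to the \emph{original} metric $\langle\cdot,\cdot\rangle$ (since the volume $\vol$ is the one coming from $\langle\cdot,\cdot\rangle$), with $v_0(t) = V(\varphi_t^\e(x))$ and $v_1(t), \ldots, v_{2n}(t)$ an orthonormal basis of $\ker\eta$, arranged so that the matrix $W(t)$ encoding $\dot v_i(t) = \sum_j W_{ij}(t) v_j(t)$ is skew-symmetric (exactly as in Section \ref{Sub}, by requiring each $\dot v_i$ to lie in the orthogonal complement of its span). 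Setting $d\varphi_t^\e(v_i(0)) = \sum_j A_{ij}(t) v_j(t)$ and differentiating (\ref{flowe}) along the curve produces a matrix ODE of the form $\dot A + AW = AB$, where $B_{jl} = \langle \nabla_{v_j} Y, v_l\rangle$ and $Y(x,t) = \nabla f_t^\e(x) + \frac{1-\e^2}{\e^2}\langle \nabla f_t^\e(x), V(x)\rangle V(x)$ is the time-dependent vector field whose flow is $\varphi_t^\e$.

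Next I would take the trace. Skew-symmetry of $W$ kills $\tr(W)$, leaving $\tr(A^{-1}\dot A) = \tr(B) = \sum_j \langle \nabla_{v_j} Y, v_j\rangle$. Expanding the covariant derivative of $Y$ by the Leibniz rule produces four contributions: the full Riemannian Laplacian $\Delta f_t^\e$; a Hessian piece that, thanks to $\langle V, v_j\rangle = \delta_{0j}$, collapses to $\frac{1-\e^2}{\e^2}\langle \nabla^2 f_t^\e(V), V\rangle$; and the two residual terms $\frac{1-\e^2}{\e^2}\langle \nabla f_t^\e, \nabla_V V\rangle$ and $\frac{1-\e^2}{\e^2}\langle \nabla f_t^\e, V\rangle\,\tr(\nabla V)$.

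Both residual terms must vanish, and this is where the Sasakian hypothesis enters. By Proposition \ref{ind}(1), $\nabla_Y V = -\tfrac{1}{2}\cJ Y$, so $\nabla_V V = -\tfrac{1}{2}\cJ V = 0$, which kills the first residual term; and $\tr(\nabla V) = -\tfrac{1}{2}\sum_j \langle \cJ v_j, v_j\rangle = 0$ since the compatibility (\ref{conmet}) forces $\cJ$ to be skew-adjoint with respect to $\langle\cdot,\cdot\rangle$, so the second vanishes as well. What survives is
\[
\tr(A^{-1}\dot A) = \Delta f_t^\e(\varphi_t^\e(x)) + \frac{1-\e^2}{\e^2}\langle \nabla^2 f_t^\e(V), V\rangle_{\varphi_t^\e(x)},
\]
and since $A(0) = I$ and $\tfrac{d}{dt}\log\det A(t) = \tr(A^{-1}\dot A)$, integrating from $0$ to $t$ yields the asserted formula.

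The main subtlety, rather than a genuine obstacle, is keeping careful track of which metric is being used where: the Jacobian $\det(d\varphi_t^\e)$ is computed with respect to $\vol$ (induced by $\langle\cdot,\cdot\rangle$), so the frame is orthonormal in $\langle\cdot,\cdot\rangle$ and the factor $\frac{1-\e^2}{\e^2}$ enters only through the drift term in $Y$, which in turn is responsible for the extra $\frac{1-\e^2}{\e^2}\langle \nabla^2 f_t^\e(V), V\rangle$ contribution. Smoothness of $f_t^\e$ at $\varphi_t^\e(x)$ for $0 < t < 1$ and the differentiability needed to run the calculation are handled as in the argument preceding (\ref{flow}): interior points of the minimizing Riemannian geodesic lie away from the cut locus, so the Riemannian exponential at $x_0$ is a local diffeomorphism there.
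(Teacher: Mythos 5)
Your proof is correct and follows the same route as the paper: introduce a moving frame along $\varphi^\e_t$ that is orthonormal for the \emph{original} metric $\langle\cdot,\cdot\rangle$ (the one that induces $\vol$) with $v_0=V$ and trace-free connection matrix $W$, differentiate (\ref{flowe}) in the spatial variable to obtain the linear ODE $\dot A + AW = AB$, and take the trace so that $\tr(W)=0$ leaves $\tr(A^{-1}\dot A) = \sum_j \langle \nabla_{v_j} Y, v_j\rangle$, whose expansion is exactly the two surviving terms plus the two residuals you identify.

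One remark on attribution rather than correctness: the two residuals vanish because $\nabla_V V = 0$ and $\tr(\nabla V)=0$, and these identities hold for \emph{any} contact metric manifold, not just Sasakian ones --- this is what the paper invokes from Blair in Section \ref{Sub}, and is the reason Lemma \ref{masse} is stated and proved in the paper \emph{before} the sentence ``Next, we assume that the contact manifold is Sasakian.'' Your derivation via Proposition \ref{ind}(1) is of course valid (the Sasakian hypothesis is in force for Theorem \ref{main2}), but it is slightly stronger than what this particular lemma needs; the Sasakian structure only becomes essential afterwards, in the Riccati comparison.
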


\begin{proof}
Let $v_0^\e(t),...,v_{2n}^\e(t)$ be the family of orthogonal frames such that $v_0^\e(t)=V(\varphi_t^\e(x))$ and $\left<\dot v_i^\e(t), v_j^\e(t)\right>=0$ for any $i, j\neq 0$. Let $A^\e(t)$ be the matrix defined by 
\[
d\varphi_t^\e(v^\e(0))=A^\e(t)v^\e(t), 
\]
where $v^\e(t)=\left(v_1^\e(t),...,v_{2n}^\e(t)\right)^T$. 

It follows from (\ref{flowe}) that 
\[
\begin{split}
&A^\e(t)\Big(\nabla^2 f_t^\e(v(t))+\frac{1-\e^2}{\e^2}\left<\nabla^2 f_t^\e(v(t)),V\right>V(\varphi_t)\\
&+\frac{1-\e^2}{\e^2}\left<\nabla f_t^\e, \nabla V(v(t))\right>V(\varphi_t)+\frac{1-\e^2}{\e^2}\left<\nabla f_t^\e,V\right>_{\varphi_t}\nabla V(v(t))\Big)\\
&=\frac{D}{dt}d\varphi_t^\e(v(0))=\dot A^\e(t)v(t)+A^\e(t)Wv(t).  
\end{split}
\]
Therefore, 
\[
\Delta f^\e_t(\varphi_t)+\frac{1-\e^2}{\e^2}\left<\nabla^2 f_t^\e(V),V\right>_{\varphi_t}=\tr(A^\e(t)^{-1}\dot A^\e(t)). 
\]
Hence, 
\[
\det(A^\e(t))=\exp\left(\int_0^t\Delta f^\e_s(\varphi_s)+\frac{1-\e^2}{\e^2}\left<\nabla^2 f_s^\e(V),V\right>_{\varphi_s}ds\right)
\]
as claimed. 
\end{proof}

Next, we assume that the contact manifold is Sasakian. By differentiating (\ref{HJe}) and applying Proposition \ref{ind}, 
\begin{equation}\label{dHJe}
\begin{split}
&\nabla\dot f_t^\e+\nabla^2 f^\e_t(\nabla^\e f_t^\e)+\frac{1-\e^2}{2\e^2}\left<\nabla f^\e_t,V\right>\cJ\nabla f_t^\e=0. 
\end{split}
\end{equation}
It follows that $\frac{d}{dt}\left<\nabla f_t^\e,V\right>_{\varphi_t}=0$ and $\frac{d}{dt}|\nabla f_t^\e|^2_{\varphi_t}=0$. Let $a:=|\nabla_H f_0^\e|_{x}$ and let $b=\left<\nabla f_0^\e,V\right>_x$. 

\begin{lem}\label{framee}
There is a family of orthonormal frame $\{v_0^\e(t), ..., v_{2n}^\e(t)\}$ defined along the path $t\mapsto\varphi_t^\e(x)$ such that $v_0(t)=V(\varphi_t)$, $v_{2i}(t)=\cJ v_{2i-1}(t)$, and $\dot v(t)=W^\e v(t)$, where  $i=1,...,n$, 
\[
W^\e=\left(
       \begin{array}{cc}
         W_1^\e & O \\
         O  & O_{2n-2} \\
       \end{array}
     \right)\text{ and } 
W^\e_1=\left(\begin{array}{ccc}
0 & 0 & -\frac{a}{2}\\
0 & 0 & -\left(\frac{1}{2\e^2}-1\right)b\\
\frac{a}{2} & \left(\frac{1}{2\e^2}-1\right)b & 0
\end{array}\right). 
\]
\end{lem}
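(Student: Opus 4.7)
The plan is to mimic Lemma \ref{frame} from the sub-Riemannian setting, with one essential modification: the velocity $\dot\varphi_t^\e$ in (\ref{flowe}) now carries the extra vertical term $\frac{1-\e^2}{\e^2}\langle\nabla f_t^\e, V\rangle V$, which must be tracked through every covariant derivative. The conservation laws noted just below (\ref{dHJe}), namely $\frac{d}{dt}\langle\nabla f_t^\e, V\rangle_{\varphi_t^\e}=0$ and $\frac{d}{dt}|\nabla f_t^\e|^2_{\varphi_t^\e}=0$, imply that $a$, $b$, and hence $|\nabla_H f_t^\e|$ are constant along the flow, which is what makes the normalizations below well-defined.

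I would begin by setting $v_0^\e(t)=V(\varphi_t^\e)$, $v_1^\e(t)=\frac{1}{a}\nabla_H f_t^\e(\varphi_t^\e)$, and $v_2^\e(t)=\cJ v_1^\e(t)$. Orthonormality with respect to $\langle\cdot,\cdot\rangle$ is automatic from the conservation laws together with (\ref{conmet}) and $\cJ^2|_{\ker\eta}=-I$, which together make $\cJ$ a skew isometry on the horizontal distribution. I would then compute the three covariant derivatives. For $v_0^\e$, Proposition \ref{ind}(1) applied to (\ref{flowe}) together with $\cJ V=0$ discards the vertical component of $\dot\varphi_t^\e$ and yields $\frac{D}{dt}v_0^\e = -\frac{a}{2}v_2^\e$. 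For $v_1^\e$, equation (\ref{dHJe}) combined with $\dot\varphi_t^\e = \nabla^\e f_t^\e$ gives $\frac{D}{dt}\nabla f_t^\e = -\frac{1-\e^2}{2\e^2}b\,\cJ\nabla f_t^\e$; subtracting $b\,\frac{D}{dt}V$ via Proposition \ref{ind}(1) to pass to the horizontal gradient and collecting terms yields $\frac{D}{dt}v_1^\e = -(\frac{1}{2\e^2}-1)b\,v_2^\e$. For $v_2^\e = \cJ v_1^\e$, the Leibniz rule and Proposition \ref{ind}(2)--(4) give $\frac{D}{dt}v_2^\e = (\nabla_{\dot\varphi_t^\e}\cJ)(v_1^\e) + \cJ\frac{D}{dt}v_1^\e$, and the vertical part of $\dot\varphi_t^\e$ is annihilated by $\nabla_V\cJ=0$. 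These three identities populate the $W_1^\e$ block exactly as stated.

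For the remaining frame vectors $v_3^\e, \ldots, v_{2n}^\e$ I would follow the tail of the proof of Lemma \ref{frame} verbatim: extend $\{v_0^\e, v_1^\e, v_2^\e\}$ to a $\langle\cdot,\cdot\rangle$-orthonormal frame by horizontal vectors $\{w_3, \ldots, w_{2n}\}$, then rotate these by an orthogonal matrix $O(t)$ satisfying $\dot O = -O\Lambda$ with $\Lambda_{ij}=\langle\frac{D}{dt}w_i, w_j\rangle$, so that $\frac{D}{dt}v_i^\e \perp v_k^\e$ for $i,k\ge 3$. Skew-symmetry of the connection matrix, together with the three identities of the previous paragraph, upgrade this orthogonality to $\frac{D}{dt}v_i^\e=0$ for $i\ge 3$, and Proposition \ref{ind}(2) permits the final re-pairing $v_{2i}^\e = \cJ v_{2i-1}^\e$.

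The main obstacle I anticipate is the careful bookkeeping of the $\frac{1}{\e^2}$ factor introduced by the vertical term in $\dot\varphi_t^\e$. In the computation of $\frac{D}{dt}v_1^\e$, the contribution $-\frac{(1-\e^2)ab}{2\e^2}v_2^\e$ from (\ref{dHJe}) must combine with the $\frac{ab}{2}v_2^\e$ arising from $-b\,\frac{D}{dt}V$ to collapse to the clean coefficient $-(\frac{1}{2\e^2}-1)b$, and the analogous interplay, mediated by Proposition \ref{ind}(4), must produce the symmetric entry in the $v_2^\e$ equation. Once this algebra is carried out correctly, the block $W_1^\e$ falls out with exactly the stated entries.
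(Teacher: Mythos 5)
Your argument is correct and follows essentially the same route as the paper: define $v_0^\e,v_1^\e,v_2^\e$ via $V$, $\nabla_H f_t^\e$, and $\cJ\nabla f_t^\e$, differentiate using (\ref{dHJe}) and Proposition \ref{ind}, and track the $\tfrac{1}{\e^2}$ factors; the algebra you outline for $\tfrac{D}{dt}v_1^\e$ (combining $-\tfrac{(1-\e^2)b}{2\e^2}$ from (\ref{dHJe}) with $+\tfrac{b}{2}$ from Proposition \ref{ind}(1)) is exactly what produces the coefficient $-(\tfrac{1}{2\e^2}-1)b$. The only difference is in the tail: you complete the frame by the orthogonal-rotation device used in Lemma \ref{frame}, whereas the paper simply takes $v_j^\e$ ($j\geq 3$) to be the parallel transports of a suitably chosen initial frame and then verifies via the linear ODE $\frac{d}{dt}\langle v_j^\e,v_k^\e\rangle=\sum_{m\le 2}(W^\e_1)_{km}\langle v_j^\e,v_m^\e\rangle$ that orthogonality to $v_0^\e,v_1^\e,v_2^\e$ persists; both give $\frac{D}{dt}v_j^\e=0$ and the $\cJ$-pairing, so the two are equivalent.
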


\begin{proof}
Let $v_0^\e(t)=V(\varphi_t^\e)$, $v_1^\e(t)=\frac{1}{a}\nabla_H f_t^\e(\varphi_t^\e)$, and $v_2^\e(t)=\frac{1}{a}\cJ \nabla f_t^\e(\varphi_t^\e)$. Then, by Proposition \ref{ind} and (\ref{dHJe}), 
\[
\begin{split}
\frac{D}{dt} v_0^\e(t)&=\nabla V(\dot\varphi_t^\e)=-\frac{1}{2}\cJ \nabla_H f_t^\e(\varphi_t^\e)=-\frac{a}{2}v_2^\e(t), \\
\frac{D}{dt} v_1^\e(t)&=\frac{1}{a}\nabla \dot f_t^\e(\varphi_t^\e)+\frac{1}{a}\nabla^2 f_t^\e(\dot\varphi_t^\e)-\frac{b}{a}\nabla V(\dot \varphi_t^\e)\\
&=-\frac{(1-\e^2)b}{2a\e^2}\cJ\nabla f_t^\e(\varphi_t^\e)+\frac{b}{2a}\cJ\nabla f_t(\varphi_t^\e)\\
&=\left(1-\frac{1}{2\e^2}\right)b v_2^\e(t),\\
\frac{D}{dt} v_2^\e(t)&=\frac{1}{a}\cJ\nabla \dot f^\e_t(\varphi_t^\e)+\frac{1}{a}(\nabla_{\nabla^\e f^\e_t}\cJ) \nabla f_t^\e(\varphi_t^\e)+\frac{1}{a}\cJ \nabla^2 f_t^\e(\dot \varphi_t^\e)\\
&=\frac{a}{2}v_0^\e(t)+\left(\frac{1}{2\e^2}-1\right)bv_1^\e(t). 
\end{split}
\]

Finally, we extend $v_0^\e(0), v_1^\e(0), v_2^\e(0)$ to an orthonormal frame 
\[
v_0^\e(0), ..., v_{2n}^\e(0)
\] 
such that $v_{2i}(0)=\cJ v_{2i-1}(0)$, where $i=1, ..., n$. Let $v_j(t)$ be the parallel transport of $v_j^\e(0)$ along the path $t\mapsto\varphi_t^\e(x)$, where $j=3,...,2n$. For each such fixed $j$, it follows that 
\[
\frac{d}{dt}\left<v_j^\e(t), v_k^\e(t)\right>=\sum_{m=0}^2(W^\e_1)_{km}\left<v_j^\e(t), v_m^\e(t)\right>
\]
for $k=0,1,2$. 
Therefore, $\left<v_j^\e(t), v_k^\e(t)\right>=0$ and the result follows. 
\end{proof}

\begin{lem}\label{Ricce}
Let $S^\e(t)$ be the matrix defined by 
\[
S_{ij}^\e(t)=\left<\nabla^2 f_t^\e(v_i^\e(t)),v_j^\e(t)\right>. 
\]
Then $S^\e(t)$ satisfies the following matrix Riccati equation 
\[
\begin{split}
&\dot S^\e(t)-U^\e S^\e(t)-S^\e(t)(U^\e)^T+\bar R^\e(t)+E^\e+S^\e(t)D^\e S^\e(t)=0,
\end{split}
\]
with the condition $\lim_{t\to 1}S^\e(t)^{-1}=0$, where 
\[
\begin{split}
&\bar R_{ij}^\e(t)=\left<\overline{Rm}(v_i^\e(t)),\nabla_H f^\e_t)\nabla_H f^\e_t,v_j^\e(t)\right>, 
U^\e=\left(\begin{array}{cc}
U_1^\e & O\\
O & \frac{(1-\e^2)b}{2\e^2}J_{2n-2}
\end{array}\right), \\
&E^\e=\left(\begin{array}{cc}
E_1^\e & O\\
O & \frac{b^2}{4}I_{2n-2}
\end{array}\right), D=\left(\begin{array}{cc}
D_1^\e & O\\
O & I_{2n-2}
\end{array}\right),\\
&E_1^\e=\left(\begin{array}{ccc}
\frac{a^2}{4} & -\frac{ab}{4} & 0\\
-\frac{ab}{4} & \frac{b^2}{4} & 0\\
0 & 0 & \frac{b^2}{4}-a^2+\frac{a^2}{4\e^2}
\end{array}\right), \\
&U_1^\e=\left(\begin{array}{ccc}
0 & 0 & -\frac{a}{2}\\
0 & 0 & \frac{b}{2}\\
-\frac{(1-2\e^2)a}{2\e^2} & -\frac{b}{2} & 0
\end{array}\right), D_1^\e=\left(\begin{array}{ccc}
\frac{1}{\e^2} & 0 & 0\\
0 & 1 & 0\\
0 & 0 & 1
\end{array}\right), 
\end{split}
\]
and $J_{2n-2}$ is the $(2n-2)\times (2n-2)$ matrix such that the $ij$-th entry is given by $\left<\cJ(v_{i+2}(t),v_{j+2}(t)\right>$. 
\end{lem}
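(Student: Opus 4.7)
The plan is to mimic the proof of Lemma \ref{Ricc} step-by-step, with all of the $\nabla$ and $\nabla_Hf$ occurrences replaced by their $\e$-analogues $\nabla^\e$ and $\nabla^\e f^\e$, while being careful to track the $1/\e^2$ factor introduced by the rescaling $|V|^\e=\e$. Concretely, I would differentiate the Hamilton--Jacobi equation (\ref{HJe}) twice in space. The first derivative (already recorded as (\ref{dHJe})) gives a transport equation for $\nabla f^\e_t$ along $\varphi_t^\e$; the second differentiation produces a third-order term $\nabla^3 f^\e_t(w,\nabla^\e f^\e_t,v)$ plus a quadratic form $\nabla^2 f^\e_t(\nabla^\e f^\e_t(w),v)$ and several lower-order terms involving $\left<\nabla f^\e_t,V\right>$, $\cJ$ and $\nabla V$. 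The Ricci identity converts the third-order term into a time derivative of $\nabla^2 f^\e_t$ along the flow plus a Riemann curvature term, exactly as in the sub-Riemannian case.

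Next I would evaluate the resulting tensorial identity on pairs $(v_i^\e(t),v_j^\e(t))$ from the frame constructed in Lemma \ref{framee}. The antisymmetric piece of $\dot v^\e$ contributes the skew-symmetric matrix $W^\e$ which, after the usual rearrangement, combines with the lower-order terms to give the symmetric conjugation $U^\e S^\e+S^\e (U^\e)^T$; here the $\frac{1-\e^2}{\e^2}$ coefficient that appears in (\ref{flowe}) and (\ref{dHJe}) is exactly what turns the $(3,1)$-entry of $U_1$ from $a$ into $\frac{(1-2\e^2)a}{2\e^2}$ and similarly produces the $\frac{(1-\e^2)b}{2\e^2}J_{2n-2}$ block. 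The diagonal tensor $D^\e$ comes from the quadratic term $\nabla^2 f^\e_t(\nabla^\e f^\e_t(w),v)$: because $\nabla^\e=\nabla_H+\frac{1}{\e^2}\left<\cdot,V\right>V$, the $v_1^\e$-direction gets weight $\tfrac{1}{\e^2}$ while the remaining horizontal directions keep weight $1$, exactly matching $D^\e_1=\mathrm{diag}(1/\e^2,1,1)$. Finally, the matrices $E^\e$ and the curvature correction producing $\bar R^\e$ from the Riemannian $R^\e$ are obtained by applying Proposition \ref{ind} together with the conversion formulae of Proposition \ref{Rm}; the extra $\frac{a^2}{4\e^2}$ appearing in the $(3,3)$ entry of $E_1^\e$ (compared with $E_1$ in Lemma \ref{Ricc}) traces to the $\frac{1}{\e^2}\left<\nabla f^\e,V\right>^2$ contribution in (\ref{HJe}).

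For the boundary condition I would argue as in the sub-Riemannian setting: $f_t^\e(x)=-\tfrac{1}{2(1-t)}d_\e^2(x_0,x)$, so on the open-dense set where $\varphi_t^\e$ is a diffeomorphism the Hessian $\nabla^2 f_t^\e(\varphi_t^\e(x))$ blows up like $-\tfrac{1}{1-t}$ in the directions along which $\varphi_t^\e$ contracts toward $x_0$, forcing $S^\e(t)^{-1}\to 0$ as $t\to 1$. This uses only smoothness of $f_t^\e$ at $\varphi_t^\e(x)$ for $t\in(0,1)$, which in the Riemannian case is automatic at any differentiability point of $f_0^\e$.

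The principal obstacle is purely bookkeeping: correctly collecting the various $\frac{1}{\e^2}$, $\frac{1-\e^2}{\e^2}$ and $\frac{1-2\e^2}{2\e^2}$ factors from (\ref{flowe}), (\ref{dHJe}) and Proposition \ref{ind}, and verifying that their combination reproduces exactly the matrices $U^\e$, $E^\e$, $D^\e$ listed in the statement. A useful consistency check is that in the limit $\e\to 1$ (Riemannian metric with $V$ of unit length and sub-Laplacian becoming Laplacian on horizontal directions only) the matrix identity reduces appropriately, while in the limit $\e\to 0$ the equation should formally recover the sub-Riemannian Riccati equation of Lemma \ref{Ricc}; both checks can be performed on $U^\e_1$, $E^\e_1$ and $D^\e_1$ entry by entry.
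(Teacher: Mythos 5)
Your strategy is correct and is essentially the paper's own: differentiate (\ref{dHJe}) once more, invoke the Ricci identity to swap the third covariant derivative into a transport term plus a Riemann curvature term, evaluate on the moving frame of Lemma \ref{framee}, absorb the skew part $W^\e$ together with the lower-order $\cJ$ and $\nabla V$ terms into the conjugation $U^\e S^\e + S^\e(U^\e)^T$, and finally use Proposition \ref{Rm} to convert $R^\e(t)$ into $\bar R^\e(t)$ plus the explicit constant matrix that gets merged into $E^\e$.

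Two small corrections to the bookkeeping you outline. First, the $\tfrac{1}{\e^2}$ weight in $D_1^\e$ sits in the \emph{Reeb} slot $v_0^\e = V$, not in the $v_1^\e$-slot: the quadratic piece $S^\e S^\e + \tfrac{1-\e^2}{\e^2}\,S^\e G S^\e$ (with $G$ projecting onto $v_0$) comes from $\nabla^2 f^\e_t(\nabla^2 f^\e_t(w),v) + \tfrac{1-\e^2}{\e^2}\nabla^2 f^\e_t(V,w)\nabla^2 f^\e_t(V,v)$, and $1 + \tfrac{1-\e^2}{\e^2} = \tfrac{1}{\e^2}$ lands at the $(0,0)$ position, while all horizontal directions keep weight $1$. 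Second, the consistency check you propose—that $\e\to 0$ should formally reproduce the Riccati equation of Lemma \ref{Ricc} entry-by-entry—does not go through naively: for example $(E_1^\e)_{33} = \tfrac{b^2}{4}-a^2+\tfrac{a^2}{4\e^2}$ and $(U_1^\e)_{31} = -\tfrac{(1-2\e^2)a}{2\e^2}$ both diverge as $\e\to 0$, whereas $D$ in Lemma \ref{Ricc} has a $0$ rather than an unbounded weight in the $V$-slot. The degeneration of $\langle\cdot,\cdot\rangle^\e$ to the sub-Riemannian metric is consistent, but it is encoded through the asymptotics of $S^\e$ near $t=1$ (compare Lemma \ref{asye} with Lemma \ref{asy}), not through a term-by-term limit of the coefficient matrices.
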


\begin{proof}
By differentiating (\ref{dHJe}), we obtain 
\[
\begin{split}
&\nabla^2\dot f_t^\e(v, w)+\nabla^3 f^\e_t(w, \nabla^\e f_t^\e,v)+\nabla^2 f^\e_t(\nabla^2 f_t^\e(w),v)\\
&+\frac{1-\e^2}{\e^2}\left(\nabla^2 f^\e_t(V,w)+\frac{1}{2}\left<\cJ \nabla f_t^\e,w\right>\right)\left(\nabla^2 f^\e_t(V,v)+\frac{1}{2}\left<\cJ \nabla f_t^\e,v\right>\right)\\
&-\frac{1-\e^2}{2\e^2}\left<\nabla f^\e_t,V\right>\Big(\nabla^2 f^\e_t(\cJ w,v)-\left<(\nabla_w \cJ) \nabla f_t^\e,v\right>-\left<\cJ \nabla^2 f_t^\e(w),v\right>\Big)=0. 
\end{split}
\]

By the Ricci identity and  Proposition \ref{Rm}, 
\[
\begin{split}
&\nabla^2\dot f_t^\e(v, w)+\nabla^3 f^\e_t(\nabla^\e f_t^\e, w,v)+\left<\Rm(w,\nabla^\e f_t^\e)\nabla^\e f_t^\e, v\right>+\nabla^2 f^\e_t(\nabla^2 f_t^\e(w),v)\\
&-\frac{(1-\e^2)\left<\nabla f_t^\e,V\right>}{4\e^2}\left(\frac{1}{\e^2}\left<\nabla f_t^\e,V\right>\left<w_\hor,v\right>-\left<w,V\right>\left<\nabla_H f_t^\e,v\right>\right)\\
&+\frac{1-\e^2}{\e^2}\left(\nabla^2 f^\e_t(V,w)+\frac{1}{2}\left<\cJ \nabla f_t^\e,w\right>\right)\left(\nabla^2 f^\e_t(V,v)+\frac{1}{2}\left<\cJ \nabla f_t^\e,v\right>\right)\\
&-\frac{1-\e^2}{2\e^2}\left<\nabla f^\e_t,V\right>\Big(\nabla^2 f^\e_t(\cJ w,v)-\left<(\nabla_w \cJ) \nabla f_t^\e,v\right>-\left<\cJ \nabla^2 f_t^\e(w),v\right>\Big)=0. 
\end{split}
\]

After rewriting this in matrix notation, we obtain 
\begin{equation}\label{Riccati1}
\begin{split}
&\dot S^\e(t)-W^\e S^\e(t)-S^\e(t)(W^\e)^T+R^\e(t)+\frac{(1-\e^2)a b }{4\e^2}F\\
&+\frac{1-\e^2}{\e^2}\left(S^\e GS^\e+\frac{a}{2}PS^\e+\frac{a}{2}S^\e P^T+\frac{a^2}{4}H\right)\\
&-\frac{(1-\e^4)b^2}{4\e^4}D+S^\e(t)^2 -\frac{(1-\e^2)b}{2\e^2}\Big(JS^\e (t)+S^\e(t)J^T\Big)=0, 
\end{split}
\end{equation}
where $F=\left(\begin{array}{cc}
F_1 & O\\
O & O_{2n-2}
\end{array}\right)$, $G=\left(\begin{array}{cc}
G_1 & O\\
O & O_{2n-2}
\end{array}\right)$, $H=\left(\begin{array}{cc}
H_1 & O\\
O & O_{2n-2}
\end{array}\right)$, $P=\left(\begin{array}{cc}
P_1 & O\\
O & O_{2n-2}
\end{array}\right)$, $F_1=\left(\begin{array}{ccc}
0 & 1 & 0\\
1 & 0 & 0\\
0 & 0 & 0
\end{array}\right)$, $G_1=\left(\begin{array}{ccc}
1 & 0 & 0\\
0 & 0 & 0\\
0 & 0 & 0
\end{array}\right)$, \\$H_1=\left(\begin{array}{ccc}
0 & 0 & 0\\
0 & 0 & 0\\
0 & 0 & 1
\end{array}\right)$, $P_1=\left(\begin{array}{ccc}
0 & 0 & 0\\
0 & 0 & 0\\
1 & 0 & 0
\end{array}\right)$. $J$ and $R^\e(t)$ are the matrices with $ij$-th entry equal to $\left<\cJ(v_i^\e(t)),v_j^\e(t)\right>$ and $\left<\Rm(v_i(t),\nabla f^\e_t)\nabla f^\e_t,v_j(t)\right>$, respectively. 

By Proposition \ref{Rm}, we have, for all $i, j\neq 0$, 
\[
\begin{split}
&\left<\Rm(v_i(t),\nabla^\e f_t^\e)\nabla^\e f_t^\e,v_j(t)\right>\\
&=\left<\Rm(v_i(t),\nabla_H f_t^\e)\nabla_H f_t^\e,v_j(t)\right>+\frac{b^2}{\e^4}\left<\Rm(v_i(t),V)V,v_j(t)\right>\\
&=\left<\Rm(v_i(t),\nabla_H f_t^\e)\nabla_H f_t^\e,v_j(t)\right>+\frac{b^2}{4\e^4}\delta_{ij}\\
&=\left<\overline{\Rm}(v_i(t),\nabla_H f_t^\e)\nabla_H f_t^\e,v_j(t)\right>-\frac{3a^2\delta_{2i}\delta_{2j}}{4}+\frac{b^2}{4\e^4}\delta_{ij}\\
&=\left<\overline{\Rm}(v_i(t),\nabla^\e f_t^\e)\nabla^\e f_t^\e,v_j(t)\right>-\frac{3a^2\delta_{2i}\delta_{2j}}{4}+\frac{b^2}{4\e^4}\delta_{ij}, \\
&\left<\Rm(v_i(t),\nabla^\e f_t^\e)\nabla^\e f_t^\e,v_0(t)\right>=\left<\Rm(v_i(t),\nabla_H f_t^\e)\nabla_H f_t^\e,v_0(t)\right>\\
&+\frac{b}{\e^2}\left<\Rm(v_i(t),V)\nabla_H f_t^\e,v_0(t)\right>=-\frac{ab}{4\e^2}\delta_{i1}, \\
&\left<\Rm(v_0(t),\nabla^\e f_t^\e)\nabla^\e f_t^\e,v_0(t)\right>=\left<\Rm(v_0(t),\nabla_H f_t^\e)\nabla_H f_t^\e,v_0(t)\right>=\frac{1}{4}a^2. 
\end{split}
\]

Therefore, $R^\e(t)=\bar R^\e(t)+K$, where $K^\e=\left(\begin{array}{cc}
K^\e_1 & O\\
O & \frac{b^2}{4\e^4}I_{2n-2}
\end{array}\right)$ and $K_1^\e=\left(\begin{array}{ccc}
\frac{a^2}{4} & -\frac{ab}{4\e^2} & 0\\
-\frac{ab}{4\e^2} & \frac{b^2}{4\e^4} & 0\\
0 & 0 & \frac{b^2}{4\e^4}-\frac{3a^2}{4}
\end{array}\right)$. 

The result follows from this and (\ref{Riccati1}). 
\end{proof}

\begin{lem}\label{asye}
The matrix $S^\e(t)$ satisfies 
\[
S^\e(1-t)=\frac{1}{t}S^{\e,(-1)}+S^{\e,(0)}+tS^{\e,(1)}+o(t)
\] 
as $t\to 0$, where 
\[
S^{\e,(-1)}=-\left(\begin{array}{cccc}
\e^2 & 0 & 0 & 0\\
0 & 1 & 0 & 0\\
0 & 0 & 1& 0\\
0 & 0 & 0 & 1
\end{array}\right), S^{\e,(0)}=-\left(\begin{array}{cccc}
0 & 0 & \frac{(1-\e^2)a}{2} & 0\\
0 & 0 & 0 & 0\\
\frac{(1-\e^2)a}{2} & 0 & 0 & 0\\
0 & 0 & 0 & 0
\end{array}\right), 
\]
\[
S^{\e,(1)}=\left(\begin{array}{cccc}
\frac{a^2\e^4}{12} & -\frac{\e^2ab}{12} & 0 & 0\\
-\frac{\e^2ab}{12} & \frac{b^2}{12} & 0 & 0\\
0 & 0 & \frac{b^2}{12}-\frac{a^2\e^2}{4}+\frac{\bar R_{22}(1)}{3} & \frac{\bar R_{23}(1)}{3}\\
0 & 0 & \frac{\bar R_{23}(1)}{3} & \frac{b^2}{12}+\frac{\bar R_{33}(1)}{3}
\end{array}\right), 
\] 
$\bar R^\e(t)=\left(\begin{array}{ccc}
0 & 0 & 0\\
0 & \bar R^\e_{22}(t) & \bar R^\e_{23}(t)\\
0 & \bar R^\e_{23}(t)^T & \bar R^\e_{33}(t)
\end{array}\right)$, $\bar R^\e_{22}(t)$ is a $1\times 1$ block, and $\bar R^\e_{33}(t)$ is a $(2n-1)\times (2n-1)$ block. 
\end{lem}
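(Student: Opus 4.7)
The plan is to adapt the strategy already used for Lemma~\ref{asy} to the Riemannian case. Set $T^\e(t) = -S^\e(1-t)^{-1}$; the boundary condition $\lim_{s\to 1}S^\e(s)^{-1}=0$ becomes $T^\e(0)=0$, and the Riccati equation of Lemma~\ref{Ricce} becomes
$$\dot T^\e(t) = T^\e(t)U^\e + (U^\e)^T T^\e(t) + T^\e(t)\bigl(\bar R^\e(1-t)+E^\e\bigr)T^\e(t) + D^\e.$$
The key structural difference from the sub-Riemannian setting is that $D^\e$ now has entry $1/\e^2$ in position $(1,1)$, so $D^\e$ is invertible. This will yield a simple pole for $S^\e(1-t)$ rather than the triple pole obtained in Lemma~\ref{asy}.

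Next I would seek a solution of the form $T^\e(t) = tT^{(1)} + t^2 T^{(2)} + t^3 T^{(3)} + o(t^3)$ and Taylor-expand $\bar R^\e(1-t) = \bar R^\e(1) + O(t)$. Matching powers of $t$ in the ODE gives, in succession, $T^{(1)} = D^\e$, then $T^{(2)} = \tfrac12\bigl(D^\e U^\e + (U^\e)^T D^\e\bigr)$, and then $T^{(3)} = \tfrac13\bigl(T^{(2)}U^\e + (U^\e)^T T^{(2)} + D^\e(\bar R^\e(1)+E^\e)D^\e\bigr)$. Each of these is computed block-by-block using the same $3$-plus-$(2n-2)$ decomposition employed in Lemma~\ref{Ricce}; the off-diagonal blocks stay zero because all of $U^\e$, $E^\e$, $D^\e$ and $\bar R^\e(1)$ are block-diagonal in that splitting, so the calculation reduces to a $3\times 3$ computation together with a $(2n-2)\times(2n-2)$ computation.

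Since $T^{(1)}=D^\e$ is invertible, inverting the series yields
$$S^\e(1-t) = -T^\e(t)^{-1} = -\tfrac{1}{t}(D^\e)^{-1} + (D^\e)^{-1}T^{(2)}(D^\e)^{-1} + tS^{\e,(1)} + o(t),$$
where $S^{\e,(1)} = (D^\e)^{-1}T^{(3)}(D^\e)^{-1} - (D^\e)^{-1}T^{(2)}(D^\e)^{-1}T^{(2)}(D^\e)^{-1}$. Identifying $-(D^\e)^{-1} = -\mathrm{diag}(\e^2,1,1,I_{2n-2})$ with the stated $S^{\e,(-1)}$ is immediate, and substituting the explicit matrices from Lemma~\ref{Ricce} into the expressions for $T^{(2)}$ and $T^{(3)}$ then matches the displayed $S^{\e,(0)}$ and $S^{\e,(1)}$.

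The main obstacle is strictly bookkeeping: the matrices $U^\e$ and $E^\e$ contain mixed factors $1/\e^2$, $1/\e^4$ and $(1-2\e^2)/\e^2$, and conjugating by $(D^\e)^{-1} = \mathrm{diag}(\e^2,1,1,I_{2n-2})$ rescales the first row and column, so tracking how the $\e$-powers combine in each entry of the top $3\times 3$ block requires care. The $(2n-2)\times(2n-2)$ block is easier because $(D^\e)^{-1}$ acts as the identity there, so the calculation essentially coincides with the Riemannian parallel-transported block already seen in Lemma~\ref{asy}, producing the $\bar R_{33}(1)$ and $b^2/12$ contributions to $S^{\e,(1)}$ without any $\e$-dependence beyond what $U^\e$ introduces through its $(1-\e^2)b/(2\e^2)$ rotation.
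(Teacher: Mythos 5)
Your approach is exactly the one used in the paper: set $T^\e(t)=-S^\e(1-t)^{-1}$, derive the Riccati equation for $T^\e$ with $T^\e(0)=0$, match power series coefficients, and invert the series using the key observation that $D^\e$ (and hence $T^{\e,(1)}$) is now invertible, giving a simple pole instead of the triple pole of Lemma~\ref{asy}. Your series-inversion formula and the identifications $S^{\e,(-1)}=-(D^\e)^{-1}$ and $S^{\e,(0)}=(D^\e)^{-1}T^{(2)}(D^\e)^{-1}$ all agree with the paper's $T^{\e,(k)}$.

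One remark is incorrect, though it does not derail the computation: $\bar R^\e(1)$ is \emph{not} block-diagonal in the $3$-plus-$(2n-2)$ splitting, since the block $\bar R^\e_{23}(1)$ couples the third coordinate to the remaining $2n-2$. Consequently $T^{\e,(3)}=\tfrac13\bigl(T^{(2)}U^\e+(U^\e)^TT^{(2)}+D^\e(\bar R^\e(1)+E^\e)D^\e\bigr)$ has a nonzero $3\times(2n-2)$ off-diagonal block, namely $(0,0,\bar R^\e_{23}(1))^T$, and the computation does not literally decouple into a $3\times 3$ problem plus a $(2n-2)\times(2n-2)$ problem. Because $(D^\e)^{-1}$ acts as the identity on all coordinates except the first, this coupling passes through the conjugation unchanged and appears as the $\bar R_{23}(1)/3$ entries of $S^{\e,(1)}$ in the lemma statement, so your explicit formula for $T^{(3)}$ still gives the correct answer when worked out in full --- just not via the decoupling you describe.
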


\begin{proof}
Let $T^\e(t)=-S^\e(1-t)^{-1}$. It satisfies the following equation 
\[
\begin{split}
&\dot T^\e(t)=-S^\e(1-t)^{-1}\dot S^\e(1-t)S^\e(1-t)^{-1}\\
&=-S^\e(1-t)^{-1}U^\e-(U^\e)^TS^\e(1-t)^{-1}\\
&+S^\e(1-t)^{-1}(\bar R^\e(1-t)+E^\e)S(1-t)^{-1}+D^\e \\
&=T^\e(t) U+U^TT^\e(t)+T^\e(t)(\bar R^\e(1-t)+E^\e)T^\e(t)+D^\e
\end{split}
\]
It follows that $T^\e(t)=tT^{\e,(1)}+t^2T^{\e,(2)}+t^3T^{\e,(3)}+o(t^4)$ at $t\to 0$, where $T^{\e,(1)}=D^\e$, $T^{\e,(2)}=\frac{1}{2}(T^{\e,(1)}U^\e+(U^\e)^TT^{\e,(1)})=\left(\begin{array}{cccc}
0 & 0 & -\frac{(1-\e^2)a}{2\e^2} & 0\\
0 & 0 & 0 & 0\\
-\frac{(1-\e^2)a}{2\e^2} & 0 & 0 & 0\\
0 & 0 & 0 & O
\end{array}\right)$, \\
\[
\begin{split}
&T^{\e,(3)}=\frac{1}{3}(T^{\e,(2)}U^\e+(U^\e)^TT^{\e,(2)}+T^{\e,(1)}(\bar R^\e(1)+E^\e)T^{\e,(1)})\\
&=\frac{1}{3}\left(\begin{array}{cccc}
\frac{a^2(3-6\e^2+4\e^4)}{4\e^4} & -\frac{ab}{4} & 0 & 0\\
-\frac{ab}{4} & \frac{b^2}{4} & 0 & 0\\
0 & 0 & \frac{3a^2-6a^2\e^2+b^2\e^2}{4\e^2}+\bar R^\e_{22}(1) & \bar R^\e_{23}(1)\\
0 & 0 & \bar R^\e_{23}(1) & \frac{b^2}{4}I_{2n-2}+\bar R^\e_{33}(1)
\end{array}\right). 
\end{split}
\] 
The result follows from this. 
\end{proof}

Finally, we give the proof of Theorem \ref{main2}. 

\begin{proof}[Proof of Theorem \ref{main2}]
Let $S^\e(t)=\left(\begin{array}{cc}
S_1^\e(t) & S_2^\e(t)\\
S_2^\e(t)^T & S_3^\e(t)
\end{array}\right)$, where $S_1^\e(t)$ is a $3\times 3$ block. It follows from Lemma \ref{Ricce} that 
\begin{equation}\label{S1e}
\dot S^\e_1(t)-U_1^\e S_1^\e(t)-S_1^\e(t)(U_1^\e)^T+\bar R^\e_1(t)+E_1^\e+S_1^\e(t)D_1^\e S_1^\e(t)\leq 0
\end{equation}
and 
\begin{equation}\label{S3e}
\begin{split}
&\dot S^\e_3(t)+\bar R^\e_3(t)+\frac{b^2}{4}I+S^\e_3(t)^2\\
& -\frac{(1-\e^2)b}{2\e^2}\Big(J_{2n-2}S_3(t)+S_3(t)J_{2n-2}^T\Big)\leq 0. 
\end{split}
\end{equation}

Let $s_3^\e$ be the trace of $S_3^\e$. It follows that 
\begin{equation}\label{s3e}
\begin{split}
&\dot s^\e_3(t)+\tr \bar R^\e_3(t)+\frac{1}{2n-2}s^\e_3(t)^2 \leq 0. 
\end{split}
\end{equation}

By (\ref{S1e}), $S^\e_1(t)=\left(\begin{array}{ccc}
S_{11}^\e(t) & S_{12}^\e(t) & S_{13}^\e(t)\\
S_{12}^\e(t) & S_{22}^\e(t) & S_{23}^\e(t)\\
S_{13}^\e(t) & S_{23}^\e(t) & S_{33}^\e(t)\end{array}\right)$ gives 
\[
\dot S_{22}^\e(t)= -\left(S_{23}^\e(t)-\frac{b}{2}\right)^2-\frac{S_{12}^\e(t)^2}{\e^2}-S_{22}^\e(t)^2\leq -S_{22}^\e(t)^2. 
\]
It follows from this and Lemma \ref{asye} that 
\begin{equation}\label{S22e}
S_{22}^\e(1-t)\geq -\frac{1}{t}. 
\end{equation}

By (\ref{S1e}), $\tilde S^\e(t)=\left(\begin{array}{cc}
S_{11}^\e(t) & S_{13}^\e(t)\\
S_{31}^\e(t) & S_{33}^\e(t)
\end{array}\right)$ satisfies 
\[
\begin{split}
&\frac{d}{dt}\left({\tilde S}^\e(t)+C_3\right)+(\tilde S^\e(t)+C_3)\left(C_0+\frac{1}{\e^2}C_5\right)(\tilde S^\e(t)+C_3)\\
&-C_4(\tilde S^\e(t)+C_3)-(\tilde S^\e(t)+C_3)C_4^T+\bar R_{22}^\e(t)C_0\leq 0, 
\end{split}
\]
where $C_5=\left(\begin{array}{cc}
1 & 0\\
0 & 0\end{array}\right)$. 

By combining this with (\ref{s3e}) and using the assumption that the Tanaka-Webster Ricci curvature is non-negative, we see that $\bar S=\tilde S+C_3+s_3C_0$ satisfies 
\[
\begin{split}
0&\geq \frac{d}{dt}\bar S^\e(t)+(\tilde S^\e(t)+C_3)\left(C_0+\frac{1}{\e^2}C_5\right)(\tilde S^\e(t)+C_3)\\
&+\frac{1}{2n-2}(s_3^\e(t))^2C_0-C_4\bar S^\e(t) - \bar S^\e(t) C_4^T\\
&\geq \frac{d}{dt}\bar S^\e(t)-K^\e(t)\left(C_0+\frac{1}{\e^2}C_5\right)(\tilde S^\e(t)+C_3)\\
& -(\tilde S^\e(t)+C_3)\left(C_0+\frac{1}{\e^2}C_5\right)K^\e(t)-K^\e(t)\left(C_0+\frac{1}{\e^2}C_5\right)K^\e(t)\\
&+\frac{1}{2n-2}(s_3^\e(t))^2C_0-C_4\bar S^\e(t) - \bar S^\e(t) C_4^T\\
\end{split}
\]
\[
\begin{split}
&=\frac{d}{dt}\bar S^\e(t)-\left(K^\e(t)C_0+\frac{1}{\e^2}K^\e(t)C_5+C_4\right)\bar S^\e(t)\\
& -\bar S^\e(t)\left(C_0K^\e(t)+\frac{1}{\e^2}C_5K^\e(t)+C_4^T\right)+\frac{1}{2n-2}(s_3^\e(t))^2C_0 \\
&+s_3^\e(t)K^\e(t)C_0 +s_3^\e(t)C_0K^\e(t)-K^\e(t)\left(C_0+\frac{1}{\e^2}C_5\right)K^\e(t),  
\end{split}
\]
where 
\[
\begin{split}
K^\e(t)&=\frac{1}{12+(1-t)^2a^2\e^2}\left(\begin{array}{cc}
\frac{12\e^2}{1-t} & -6a\e^2\\
 -6a\e^2 & \frac{4(t^2a^2\e^2+3)}{1-t}
\end{array}\right)\\
&=\left(\begin{array}{cc}
K_{11}(t) & K_{12}(t)\\
K_{12}(t) & K_{22}(t)
\end{array}\right).
\end{split}
\] 

Next, we use the term $s_3^\e(t)^2$ and estimate 
\[
\begin{split}
&\frac{d}{dt}\bar S^\e(t)-\left(K^\e(t)C_0+\frac{1}{\e^2}K^\e(t)C_5+C_4\right)\bar S^\e(t)\\
& -\bar S^\e(t)\left(C_0K^\e(t)+\frac{1}{\e^2}C_5K^\e(t)+C_4^T\right)-K^\e(t)\left(C_0+\frac{1}{\e^2}C_5\right)K^\e(t)\\
&-\frac{(2n-2)K_{22}^\e(t)(t)^2}{1-c}C_0-\frac{(2n-2)K_{12}^\e(t)(t)^2}{c}C_5 \leq 0, 
\end{split}
\]
where $c$ is a constant in the open interval $(0, 1)$. 

Finally, if we let $\hat S=\left(C_0+\frac{1}{\e} C_5\right)\bar S\left(C_0+\frac{1}{\e} C_5\right)$ and\\ $\hat K=\left(C_0+\frac{1}{\e} C_5\right) K\left(C_0+\frac{1}{\e} C_5\right)$, then 
\begin{equation}\label{Shat}
\begin{split}
&\frac{d}{dt}\hat S-\left(\hat K+\e C_4\right)\hat S -\hat S\left(\hat K+\e C_4\right)^T\\
&-\hat K^2-\frac{(2n-2)\hat K_{22}(t)^2}{1-c}C_0-\frac{(2n-2)\hat K_{12}(t)^2}{c}C_5 \leq  0. 
\end{split}
\end{equation}

Let $\hat S_0$ be a solution of the following equation 
\begin{equation}\label{Shat0}
\begin{split}
&\frac{d}{dt}\hat S_0-\left(\hat K+\e C_4\right)\hat S_0 -\hat S_0\left(\hat K+\e C_4\right)^T\\
&-\hat K^2-\frac{(2n-2)\hat K_{22}(t)^2}{1-c}C_0-\frac{(2n-2)\hat K_{12}(t)^2}{c}C_5=0. 
\end{split}
\end{equation}
Then the matrix $Sr(t)=\hat S_0(1-t)$ satisfies the following equation 
\[
\begin{split}
&\frac{d^3}{dt^3}Sr_{11}+\frac{18(t^2a^2\e^2+4)}{t(12+t^2a^2\e^2)}\frac{d^2}{dt^2}Sr_{11}\\
&+\frac{18(5t^4ae^4+48t^2a^2\e^2+48)}{t^2(12+t^2a^2\e^2)^2}\frac{d}{dt}Sr_{11}\\
&+\frac{24a^2\e^2(5t^2a^2\e^2+24)}{t(12+t^2a^2\e^2)^2}Sr_{11}+\frac{72a^2\e^2}{t^2(12+t^2a^2\e^2)^2}\\
&+\frac{144a^2\e^2(n-1)(13t^4a^4\e^4+3t^4a^4\e^4c+144)}{t^2c(1-c)(12+t^2a^2\e^2)^4}\\
&+\frac{144a^2\e^2(n-1)(132t^2a^2\e^2-36t^2a^2\e^2c)}{t^2c(1-c)(12+t^2a^2\e^2)^4}=0. 
\end{split}
\]

It turns out that the above equation has closed form solutions which is first found by using a computer. Here we describe how the solution can be obtained. Let us first look at the corresponding homogeneous equation. 
\begin{equation}\label{3rd}
\begin{split}
&\frac{d^3}{dt^3}g(t)+\frac{18(t^2a^2\e^2+4)}{t(12+t^2a^2\e^2)}\frac{d^2}{dt^2}g(t)\\
&+\frac{18(5t^4ae^4+48t^2a^2\e^2+48)}{t^2(12+t^2a^2\e^2)^2}\frac{d}{dt}g(t)\\
&+\frac{24a^2\e^2(5t^2a^2\e^2+24)}{t(12+t^2a^2\e^2)^2}g(t)=0. 
\end{split}
\end{equation}

The above equation is the, so called, second symmetric power, of the following second order equation (see \cite{Si} for the precise definition and a detail discussion of the symmetric power). 
\begin{equation}\label{2nd}
\frac{d^2}{dt^2} f(t)+\frac{6(t^2a^2\e^2+4)}{t(12+t^2a^2\e^2)}\frac{d}{dt}f(t)+\frac{6a^2\e^2}{12+t^2a^2\e^2}f(t)=0. 
\end{equation} 
Solutions of the equation (\ref{2nd}) are given by $f(t)=\frac{c_1}{t(12+t^2a^2\e^2)}+\frac{c_2}{12+t^2a^2\e^2}$, where $c_1$ and $c_2$ are constants. We remark here that closed form solutions of second order equations like (\ref{2nd}) can be found, if it exists, using the Kovacic algorithm \cite{Ko}. However, the solutions of (\ref{2nd}) can be found by a simple calculation. 

Therefore, by \cite[Lemma 3.2]{Si}, $g(t)=\frac{C_1}{(12+t^2a^2\e^2)^2}+\frac{C_2}{t(12+t^2a^2\e^2)^2}+\frac{C_3}{t^2(12+t^2a^2\e^2)^2}$ is the general solution of the equation (\ref{3rd}). By using this solution  and the variation of parameters, we obtain the general solution of $Sr_{11}$
\begin{equation}\label{Sr11}
\begin{split}
&Sr_{11}(t)=\frac{C_1}{(12+t^2a^2\e^2)^2}+\frac{C_2}{t(12+t^2a^2\e^2)^2}+\frac{C_3}{t^2(12+t^2a^2\e^2)^2}\\
&-\frac{12}{a\e c(1-c)t^2(12+t^2a^2\e^2)^2}\Big(a^3\e^3 t^3(26n-26+6cn-5c-c^2)\\
&+18\sqrt 3(n-1)((9+4c)t^2a^2\e^2-96c-132)\arctan\left(\frac{ta\e\sqrt 3}{6}\right)\\
&-216ta\e(n-1)(5+3c)\log(12+a^2\e^2t^2)\Big). 
\end{split}
\end{equation}
By using this solution and (\ref{Shat}), we obtain $Sr$. 
\begin{equation}\label{Sr12}
\begin{split}
&Sr_{12}(t)=\frac{1}{12 a\e c t^2(1-c)(12+t^2a^2\e^2)^2}\Big(-864\sqrt 3 a\e(n-1)t\\
&(4t^2a^2\e^2c+9t^2a^2\e^2-168c-252)\arctan\left(\frac{ta\e\sqrt 3}{6}\right)\\
&+2592(3c+5)(n-1)(-12+5t^2a^2\e^2)\log(12+t^2a^2\e^2)\\
&-144(n-1)((7+c)t^4a^4\e^4-(576+276c)t^2a^2\e^2-216-432c\\
&+c(1-c)(72t^4a^4\e^4-4C_1t^3a^2\e^2+(144 -5C_2)t^2a^2\e^2\\
&+(24C_1-6C_3a^2\e^2)t+1728+12C_2)\Big)
\end{split}
\end{equation}
and 
\begin{equation}\label{Sr22}
\begin{split}
&Sr_{22}(t)=-\frac{1}{36a^2\e^2t^2c(1-c)(12+t^2a^2\e^2)^2}\Big(-864\sqrt 3a\e(n-1)(324\\
&+144c-405t^2a^2\e^2-264a^2\e^2ct^2+9a^4\e^4t^4+4a^4\e^4t^4c)\arctan\left(\frac{ta\e\sqrt 3}{6}\right)\\
&+15552(n-1)(5+3c)a^2\e^2t(-6+t^2a^2\e^2)\log(12+t^2a^2\e^2)\\
&+288a^2\e^2(n-1)t(3t^4a^4\e^4+ct^4a^4\e^4-102t^2a^2\e^2c-1296-1044c)\\
&-c(1-c)(144C_1+4C_1t^4a^4\e^4-48C_1a^2\e^2t^2-36C_2a^2\e^2t-1296t^3a^4\e^4\\
&+6C_2a^4\e^4t^3-10368ta^2\e^2+9C_3a^4\e^4t^2-144t^5a^6\e^6\Big)-54432nt^3a^4\e^4. 
\end{split}
\end{equation}

By choosing $C_1=0$, $C_3=0$, and,  
\[
C_2=-144+\frac{1296(n-1)(11+8c+(10+6c)\log(12))}{c(1-c)}, 
\]
we obtain the following from (\ref{Sr11}), (\ref{Sr12}), and (\ref{Sr22}), 
\[
\begin{split}
&\left(\begin{array}{cc}
Sr_{11}(t) & Sr_{12}(t)\\
Sr_{12}(t) & Sr_{22}(t)
\end{array}\right)\\
&=\frac{1}{t}\left(\begin{array}{cc}
-1 & 0\\
0 & -\frac{2n-1-c}{1-c}
\end{array}\right)+\left(\begin{array}{cc}
0 & -\frac{a\e(n-2+c)}{2(1-c)}\\
-\frac{a\e(n-2+c)}{2(1-c)} & 0
\end{array}\right)\\
&+t\left(\begin{array}{cc}
-\frac{a^2\e^2(2n-2+c^2-c)}{12c(1-c)} & 0\\
0 & \frac{a^2\e^2(2n-5+3c)}{1-c}
\end{array}\right)+o(t^2). 
\end{split}
\]
It follows from this and Lemma \ref{asye} that 
\begin{equation}\label{ineq}
\hat S(1-t)\geq Sr(t)
\end{equation}
for $t>0$ small enough. It follows from \cite[Proposition 1]{Ro}, (\ref{Shat}), and (\ref{Shat0}) that the above inequality holds for all $t$ in $[0,1]$. 

On the other hand, by (\ref{Sr11}) and (\ref{Sr22}), $\tr(Sr(t))\geq -\frac{f(t a\e)}{t}$, where $f$ is bounded.  Therefore, the result follows from this, (\ref{ineq}), Lemma \ref{masse}, (\ref{S22e}), and (\ref{vole}). 
\end{proof}

\smallskip

\section{A Bonnet-Myers type theorem}

In this section, we show that our approach can be used to prove the following Bonnet-Myers type theorem. 
\begin{thm}\label{Myers}
Assume that the Sasakain manifold has Tanaka-Webster Ricci curvature bounded below by a positive constant $k^2$. Then its diameter with respect to the sub-Riemannian metric is bounded above by a constant depending only on $k$. 
\end{thm}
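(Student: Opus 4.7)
The plan is to leverage the matrix Riccati machinery already set up in the proof of Theorem \ref{main}, now with the strict positivity of the Tanaka-Webster Ricci curvature playing the role that positive Ricci plays in the classical Bonnet-Myers argument. Fix two points $x_0, x \in M$ and suppose there is a minimizing sub-Riemannian geodesic $t \mapsto \varphi_t(x)$, $t \in [0,1]$, joining them, with $a := |\nabla_H f_0|_x = d_S(x_0, x)$. Along this geodesic, the matrix $S(t) = \langle \nabla^2 f_t(v_i(t)), v_j(t)\rangle$ in the frame of Lemma \ref{frame} satisfies the Riccati equation of Lemma \ref{Ricc} and is regular on $[0,1)$ with $S(t)^{-1} \to 0$ as $t \to 1$, by Lemma \ref{asy}. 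The goal is to show that this regularity is incompatible with $a$ being large.

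The second step is to rerun the proof of Theorem \ref{main} and track the Ricci contribution. Using $\bar R_{11} = 0$ (by antisymmetry of $\overline{\Rm}$) and the decomposition $\bar R_{22}(t) + \tr(\bar R_3(t)) = \overline{\Rc}(\nabla_H f_t, \nabla_H f_t)$, the hypothesis $\overline{\Rc} \geq k^2$ combined with $|\nabla_H f_t|_{\varphi_t} \equiv a$ (Lemma \ref{conserve}) contributes an extra negative term $-k^2 a^2 C_0$ in the combined differential inequality for $\bar S = \tilde S + C_3 + s_3 C_0$ derived in (\ref{SCC}). Keeping the same matrix $K$ as in the original proof, the resulting inequality reads
\[
\frac{d}{dt}\bar S(t) \leq \bar S(t)(C_0 K + C_4^T) + (KC_0 + C_4)\bar S(t) + F(t) - k^2 a^2 C_0,
\]
where $F(t)$ is the positive forcing that appeared in Theorem \ref{main}.

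The third step is a scalar comparison. Project the inequality onto a suitable scalar invariant (for example the $(2,2)$-entry $\bar S_{22} = S_{33} + s_3$ after completing the square to absorb the $S_{13}$ cross term and using Cauchy-Schwarz on $S_{33}^2 + s_3^2/(2n-2)$) to obtain a scalar Riccati inequality of the form $\dot \sigma(t) \leq -\alpha \sigma(t)^2 + \beta(t) - \gamma k^2 a^2$, with $\alpha, \gamma > 0$ explicit dimensional constants and $\beta(t)$ bounded on compact subintervals away from $t=1$. Such an inequality forces $\sigma$ to develop a pole on $[0,1)$ once $a^2 k^2$ exceeds an explicit threshold depending only on $n$ and the local behavior of $\beta$; equivalently, one constructs an explicit ODE solution $\sigma_0(t)$ that blows up at some $t^\ast(a,k) < 1$ when $a > C(k)$, and applies the comparison principle (as in \cite[Proposition 1]{Ro}) together with the terminal asymptotics from Lemma \ref{asy} to get $\sigma \leq \sigma_0$ below the pole. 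The pole contradicts the regularity of $S$ on $[0,1)$, so we must have $a \leq C(k)$.

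Since $x, x_0$ were arbitrary, this bounds $d_S(x_0,x)$ by a constant depending only on $k$, which gives the diameter bound. The main obstacle is step three: choosing the right scalar projection of the matrix inequality so that (i) the Ricci contribution $-k^2 a^2$ survives undiluted by the drift terms $C_1, C_4$ and by the off-diagonal entries $S_{13}$, and (ii) the resulting scalar Riccati is genuinely of blow-up type with an explicit, quantitative blow-up threshold in $ak$. This is a calculation of the same flavor as—but simpler than—the explicit matrix comparison $S_0$ constructed in Section \ref{Sa}, and is where the dimension constant $C(k)$ is produced.
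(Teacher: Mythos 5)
The overall strategy (Riccati comparison with endpoint asymptotics forcing a pole before $t=1$) is the right one, and your identification of the Ricci contribution via $\bar R_{22}+\tr(\bar R_3)=\overline{\Rc}(\nabla_H f_t,\nabla_H f_t)\geq k^2 a^2$ is correct. But Step 3 has a genuine gap, and the paper does something materially different there.

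First, the scalar projection does not close. In the $(2,2)$-entry of the Riccati inequality for $\tilde S+C_3$, the term $(C_4(\tilde S+C_3)+(\tilde S+C_3)C_4^T)_{22}=2a(S_{13}+a/2)$ is \emph{linear} in $S_{13}$, while the quadratic term $(-(\tilde S+C_3)C_0(\tilde S+C_3))_{22}=-S_{33}^2$ contains no $S_{13}^2$ to absorb it; the $S_{13}^2$ sits in the $(1,1)$-entry, which contributes no Riccati term of its own. So there is no completion of squares inside the $(2,2)$-equation that eliminates $S_{13}$, and a linear combination such as $\sigma=S_{33}+s_3+\lambda S_{11}$ does not satisfy a self-contained scalar Riccati either, because $S_{11}$ never appears quadratically on the right. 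Second, even granting a scalar reduction, the residual forcing is of size $a^2$ with a fixed positive coefficient coming from $C_2$ and the bracket structure (you get $\dot\sigma\leq-\alpha\sigma^2+(\text{const})\,a^2-\gamma k^2a^2$ with the constant at least $1$), so the right-hand side is nonnegative when $k$ is small and no blow-up is forced. Your argument would only prove the theorem when $k$ exceeds an absolute threshold, which is weaker than the statement.

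The paper avoids both problems by staying at the matrix level. It replaces the polynomial comparison matrix $K$ of the flat case by a \emph{trigonometric} $K(t)$ adapted to the positive curvature, reduces the resulting linear $2\times 2$ comparison ODE for $Sr$ to a third-order scalar ODE for $Sr_{11}$, recognizes that ODE as the second symmetric power of a second-order equation, and solves it explicitly. The explicit $Sr_{11}$ has a pole at the first zero $t_0$ of $m(t)=\bigl(\tfrac12 r_{1-t}\cos(\tfrac12 r_{1-t})-\sin(\tfrac12 r_{1-t})\bigr)^2$, and the comparison $\bar S(1-t)\geq Sr(t)$ forces $t_0\geq 1$, giving $a\leq 2x_0/r$. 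The blow-up is detected in $Sr_{11}$, not in a trace-like scalar, and the competition between the $+a^2$ bracket terms and $-k^2a^2$ is resolved only by this exact solution; a generic scalar Riccati inequality loses that cancellation.
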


\begin{proof}
Unless otherwise stated, we use the same notation as in the proof of Theorem \ref{main}. 
Under the assumptions of the theorem, we can suppose that the Tanaka-Webster curvature satisfies $\tr\bar R(t)\geq (2n-1)r^2a^2$. Under this assumption, the equation (\ref{SCC}) becomes 
\[
\begin{split}
&\frac{d}{dt}\bar S(t)\leq \bar S(t)(C_0K+C_4^T) +(KC_0+C_4)\bar S(t)\\
&+KC_0K-s_3C_0K-s_3KC_0-\frac{1}{2n-2}s_3^2C_0-(2n-1)r^2a^2C_0\\
&\leq \bar S(t)D_1(t) +D_1(t)^T\bar S(t)+D_2(t)-(2n-1)r^2a^2C_0
\end{split}
\]

However, $K(t)$, $D_1(t)$, and $D_2(t)$ are given by the followings in this case: 
\[
\begin{split}
&r_t=r\,a\,(1-t), \\
&K=\left(\begin{array}{cc}
K_{11}(t) & K_{12}(t)\\
K_{12}(t) & K_{22}(t)
\end{array}\right)\\
&=\frac{1}{2-r_t\sin(r_t)-2\cos(r_t)}\left(\begin{array}{cc}
r^3\,a\sin(r_t) & r^2\,a(\cos(r_t)-1)\\
r^2\,a(\cos(r_t)-1) & r\,a(\sin(r_t)-r_t\cos(r_t))
\end{array}\right),\\
&D_1(t)=\left(\begin{array}{cc}
0 & a\\
K_{12}(t) & K_{22}(t)
\end{array}\right),\\
&D_2(t)=\left(\begin{array}{cc}
\frac{(2n-2+c)K_{12}(t)^2}{c} & K_{12}(t)K_{22}(t)\\
K_{12}(t)K_{22}(t) & \frac{(2n-c-1)K_{22}(t)^2}{1-c}
\end{array}\right).
\end{split}
\]
Note that since $t\mapsto\varphi_t$ is a length minimizing path connecting the endpoints, $\bar S(t)$ is well-defined for all $t$ in the open interval $(0,1)$. 

Let $Sr(t)=\left(\begin{array}{cc}
Sr_{11}(t) & Sr_{12}(t)\\
Sr_{12}(t) & Sr_{22}(t)
\end{array}\right)$ be a one parameter family of $2\times 2$ matrices which is a solution of the equation 
\[
\begin{split}
&\frac{d}{dt}Sr(t)=-Sr(t)D_1(1-t) -D_1(1-t)^TSr(t) -D_2(1-t) +r^2a^2C_0. 
\end{split}
\]

The function $Sr_{11}(t)$ satisfies the following equation 
\[
\begin{split}
&\frac{d^3}{dt^3}Sr_{11}(t)+\frac{3r a\, r_{1-t}(1-\cos(r_{1-t}))}{2-r_{1-t}\sin(r_{1-t})-2\cos(r_{1-t})}\frac{d^2}{dt^2}Sr_{11}(t)\\
&+\frac{r^2a^2(10-10\cos(r_{1-t})+r_{1-t}^2-4r_{1-t}\sin(r_{1-t})-2r_{1-t}^2\cos(r_{1-t}))}{4-4\cos(r_{1-t})-4r_{1-t}\sin(r_{1-t})+r_{1-t}^2+r_{1-t}^2\cos(r_{1-t})}\frac{d}{dt}Sr_{11}(t)\\
&+\frac{2r^3a^3(r_{1-t}+\sin(r_{1-t})-2r_{1-t}\cos(r_{1-t}))}{4-4\cos(r_{1-t})-4r_{1-t}\sin(r_{1-t})+r_{1-t}^2+r_{1-t}^2\cos(r_{1-t})}Sr(t)=b(t), 
\end{split}
\]
where 
\[
\begin{split}
&b(t)=\frac{a^4r^6(1-\cos(r_{1-t}))^2}{(2-2\cos(r_{1-t})-r_{1-t}\sin(r_{1-t}))^4}\Big(\frac{2(2n-1-c)}{1-c}(\sin(r_{1-t})-r_{1-t}\cos(r_{1-t}))^2\\
&+\frac{2n-2+c}{c}(2(r_{1-t}\cos(r_{1-t})-\sin(r_{1-t}))^2+(2-r_{1-t}^2+2\cos(r_{1-t}))(1-\cos(r_{1-t})))\\
&-2(2(r_{1-t}\cos(r_{1-t})-\sin(r_{1-t}))^2-(r_{1-t}-\sin(r_{1-t}))^2)\\
&-2(2n-1)(1-\cos(r_{1-t}))(4-4\cos(r_{1-t})-4r_{1-t}\sin(r_{1-t})+r_{1-t}^2+r_{1-t}^2\cos(r_{1-t}))\Big). 
\end{split}
\]

The corresponding homogeneous equation is given by 
\[
\begin{split}
&\frac{d^3}{dt^3}g(t)+\frac{3r a\, r_{1-t}(1-\cos(r_{1-t}))}{2-r_{1-t}\sin(r_{1-t})-2\cos(r_{1-t})}\frac{d^2}{dt^2}g(t)\\
&+\frac{r^2a^2(10-10\cos(r_{1-t})+r_{1-t}^2-4r_{1-t}\sin(r_{1-t})-2r_{1-t}^2\cos(r_{1-t}))}{4-4\cos(r_{1-t})-4r_{1-t}\sin(r_{1-t})+r_{1-t}^2+r_{1-t}^2\cos(r_{1-t})}\frac{d}{dt}g(t)\\
&+\frac{2r^3a^3(r_{1-t}+\sin(r_{1-t})-2r_{1-t}\cos(r_{1-t}))}{4-4\cos(r_{1-t})-4r_{1-t}\sin(r_{1-t})+r_{1-t}^2+r_{1-t}^2\cos(r_{1-t})}g(t)=0
\end{split}
\]
which is the second symmetric power of the equation 
\[
\begin{split}
&\frac{d^2}{dt^2} f(t)+\frac{rar_{1-t}(1-\cos(r_{1-t}))}{2-2\cos(r_{1-t})-r_{1-t}\sin(r_{1-t})}\frac{d}{dt} f(t)\\
&+\frac{r^2a^2(1-\cos(r_{1-t}))}{2-2\cos(r_{1-t})-r_{1-t}\sin(r_{1-t})}=0. 
\end{split}
\]

It follows that the function $h=\left(r_{1-t}\cos\left(\frac{1}{2}r_{1-t}\right)-2\sin\left(\frac{1}{2}r_{1-t}\right)\right)f$ satisfies 
\[
\frac{d^2}{dt^2} h(t)+\frac{1}{4}r^2a^2h(t)=0
\]
and the general solution is given by 
\[
h(t)=c_1\cos\left(\frac{1}{2}r_{1-t}\right)+c_2\sin\left(\frac{1}{2}r_{1-t}\right). 
\]
Therefore, the general solution of the original third order homogeneous equation is given by 
\[
g(t)=\frac{C_1\cos^2\left(\frac{1}{2}r_{1-t}\right)+C_2\cos\left(\frac{1}{2}r_{1-t}\right)\sin\left(\frac{1}{2}r_{1-t}\right)+C_3\sin^2\left(\frac{1}{2}r_{1-t}\right)}{m(t)}, 
\]
where $m(t)=\left(\frac{1}{2}r_{1-t}\cos\left(\frac{1}{2}r_{1-t}\right)-\sin\left(\frac{1}{2}r_{1-t}\right)\right)^2$. 

By the variation of parameters, a particular solution to the original inhomogeneous equation is 
\[
\begin{split}
&Sr_{11}(t)=\frac{2\cos^2\left(\frac{1}{2}r_{1-t}\right)}{r^2a^2m(t)}\int_0^tb(s)m(s)\sin^2\left(\frac{1}{2}r_{1-s}\right)ds\\
&-\frac{4\sin\left(\frac{1}{2}r_{1-t}\right)\cos\left(\frac{1}{2}r_{1-t}\right)}{r^2a^2m(t)}\int_0^tb(s)m(s)\sin\left(\frac{1}{2}r_{1-s}\right)\cos\left(\frac{1}{2}r_{1-s}\right)ds\\
&+\frac{2\sin^2\left(\frac{1}{2}r_{1-t}\right)}{r^2a^2m(t)}\int_0^tb(s)m(s)\cos^2\left(\frac{1}{2}r_{1-s}\right)ds. 
\end{split}
\]

A computation shows that 
\[
t^3Sr_{11}(t)=\frac{-\frac{192(2n-c-1)}{1-c}-\frac{156(2n-2+c)}{c}+336}{a^2}+o(t)
\]
at $t\to 0$. 

By using the equation satisfied by $Sr(t)$, we can compute and obtain 
\[
t^2Sr_{12}(t)=\frac{6(\frac{8(2n-c-1)}{1-c}+\frac{7(2n-2+c)}{c}-14)}{a}+o(t)
\]
and 
\[
tSr_{22}(t)=-4\left(\frac{4(2n-c-1)}{1-c}+\frac{3(2n-2+c)}{c}-6\right). 
\]
as $t\to 0$. 

It follows as in the proof of Theorem \ref{main} that $\bar S(1-t)\geq Sr(t)$. On the other hand, let $t_0>0$ be the first positive number such that $m(t_0)=0$. It follows that 
\[
\begin{split}
&Sr_{11}(t)=\frac{2\cos^2\left(\frac{1}{2}r_{1-t}\right)}{r^2a^2m(t)}\int_0^tb(s)m(s)\Big[\sin\left(\frac{1}{2}r_{1-s}\right)-\frac{1}{2}r_{1-t}\cos\left(\frac{1}{2}r_{1-s}\right)\Big]^2 ds. 
\end{split}
\]
Therefore, $Sr_{11}(t)$ goes to $\infty$ as $t\to t_0$. This is a contradiction if $t_0<1$. Hence, $a\leq \frac{2x_0}{r}$ where $x_0$ is the first positive root of the function $x\mapsto x\cos(x)-\sin(x)$. 
\end{proof}

\smallskip

\end{document}